
\documentclass[]{interact}

\usepackage{epstopdf}

\usepackage[caption=false]{subfig}

\usepackage[numbers,sort&compress]{natbib}

\usepackage{algorithm}
\usepackage{algpseudocode}
\usepackage{hyperref}

\bibpunct[, ]{[}{]}{,}{n}{,}{,}
\makeatletter
\def\NAT@def@citea{\def\@citea{\NAT@separator}}
\makeatother

\theoremstyle{plain}
\newtheorem{theorem}{Theorem}[section]
\newtheorem{lemma}[theorem]{Lemma}
\newtheorem{Remark}[theorem]{Remark}
\newtheorem{corollary}[theorem]{Corollary}
\newtheorem{proposition}[theorem]{Proposition}
\newtheorem{Problem}[theorem]{Problem}
\theoremstyle{definition}

\theoremstyle{remark}
\newtheorem{remark}{Remark}

\newcommand{\R}{\mathbb{R}}

\newcommand{\calL}{\mathcal{L}}
\newcommand{\ds}{\displaystyle}
\newcommand{\Id}{{\rm Id}}

\newcommand{\xx}{{\bf x}}
\newcommand{\bb}{{\bf b}}
\newcommand{\rand}{{\rm rand}}

\newcommand{\Div}{{\rm Div}}

\newcommand{\calE}{\mathcal{E}}

\begin{document}


\title{Recovering the initial condition of parabolic equations from lateral Cauchy data via the quasi-reversibility method}

\author{
\name{Qitong Li\textsuperscript{a} and Loc Hoang Nguyen\textsuperscript{a}\thanks{CONTACT Loc Hoang Nguyen. Email: loc.nguyen@uncc.edu}}
\affil{\textsuperscript{a}Department of Mathematics and Statistics, University of North Carolina
Charlotte, Charlotte, NC, 28223, USA, qli13@uncc.edu, loc.nguyen@uncc.edu.}
}

\maketitle

\begin{abstract}
We consider the problem of computing the initial condition for a general parabolic equation from the Cauchy lateral data.
	The stability of this problem is well-known to be logarithmic.
	In this paper, we introduce an approximate model, as a coupled linear system of elliptic partial differential equations. Solution to this model is the vector of Fourier coefficients of the solutions to the parabolic equation above. 
	This approximate model is solved by the quasi-reversibility method. 
	We will prove the convergence for the quasi-reversibility method as the measurement noise tends to $0$.
	The convergent rate is Lipschitz.
	We present the implementation of our algorithm in details and verify our method by showing some numerical examples.
\end{abstract}

\begin{keywords}
initial condition, 
parabolic equation, truncated Fourier
series, approximation, convergence, quasi-reversibility method
\end{keywords}

\section{Introduction}
\label{sec 1}

Let $d \geq 2$ be the spatial dimension and $\Omega$  be a open and bounded domain in $\R^d$. 
Assume that $\partial \Omega$ is smooth.
Let 
	\begin{equation}
		A = (a_{ij})_{i,j = 1}^d \in C^2(\R^d, \R^{d \times d}) \cap L^{\infty}(\R^d, \R^{d \times d})
	\label{A}
	\end{equation}
	 satisfy the following conditions
\begin{enumerate}
\item $A$ is symmetric; i.e, $A^T(\xx) = A(\xx)$  for all $\xx \in \R^d;$
\item $A$ is uniformly elliptic; i.e., there exists a positive number $\mu$ such that 
\begin{equation}
	  A(\xx) \xi \cdot \xi \geq \mu |\xi|^2 \quad \mbox{for all } \xx, \xi = (\xi_1, \dots, \xi_d) \in \R^d.
	\label{coercive}
\end{equation}
\end{enumerate}
Let $\bb = (b_1, b_2, \dots, b_d)\in C^1(\R^d, \R^d) \cap L^{\infty}(\R^d, \R^d)$ and $c \in C^1(\R^d, \R)\cap L^{\infty}(\R^d, \R)$.
Define 
\begin{equation}
	\calL v = \Div (A \nabla v) +  \bb(\xx) \cdot \nabla v(\xx) + c(\xx) v
	\label{calL}
\end{equation}
for all functions $v \in C^2(\R^d)$.
Consider the initial value problem
\begin{equation}
	\left\{
		\begin{array}{rcll}
			u_t(\xx, t) &=&\calL u(\xx, t) & \xx \in \R^d, t > 0\\
			u(\xx, 0) &=& f(\xx) &\xx \in \R^d
		\end{array}
	\right.
	\label{heat eqn}
\end{equation}
where $f \in L^2(\R^d)$ represents an initial source with support compactly contained in $\Omega$.  
We refer the reader to the books \cite{Evans:PDEs2010, Ladyzhenskaya:sv1985}.
The main aim of this paper is to solve the following problem.
\begin{Problem}
	Let $T > 0$. Given the Cauchy boundary data
	\begin{equation}
		F(\xx, t) = u(\xx, t) \quad \mbox{and } \quad G(\xx, t) = A \nabla u(\xx, t) \cdot \nu
		\label{data}
	\end{equation}
	for $ \xx \in \partial \Omega, t \in [0, T],$
	determine the function $f(\xx),$ $\xx \in \Omega$.
	Here $\nu$ is the outward normal to $\partial \Omega.$
	\label{ISP}
\end{Problem}

Problem \ref{ISP} is the problem of recovering the initial condition of  the parabolic equation from the lateral Cauchy data.
This problem has many real-world applications ; for e.g.,	 
determine the spatially distributed temperature inside a solid from the boundary measurement of the heat and heat flux in the time domain \cite{Klibanov:ip2006}; 
identify the pollution on the surface of the rivers or lakes \cite{BadiaDuong:jiip2002};
 effectively monitor the heat conductive processes in steel industries, glass and polymer forming and nuclear power station \cite{LiYamamotoZou:cpaa2009}.
Due to its realistic applications, this problem has been studied  intensively.
The uniqueness of Problem \ref{ISP} is well-known, see \cite{Lavrentiev:AMS1986}. 
Also, it can be reduced from the logarithmic stability results in \cite{Klibanov:ip2006, LiYamamotoZou:cpaa2009}. 
The natural approach to solve this problem is the optimal control method; that means, minimize a mismatch functional. 
The proof of the convergence of the optimal control method to the true solution to these inverse problems is challenging and is omitted.
One of our contributions to the field is the convergence of the quasi-reversibility method, which our method is relied on, as the measurement noise tends to $0$.

Related to the inverse problem in the current paper, the  problem of recovering the initial conditions for hyperbolic equation is very interesting since it arises in many real-world applications. 
For instance the problems thermo and photo acoustic tomography play the key roles in bio-medical imaging. 
We refer the reader to some important works in this field \cite{LiuUhlmann:ip2015, KatsnelsonNguyen:aml2018, HaltmeierNguyen:SIAMJIS2017}. 
Applying the Fourier transform, one can reduce the problem of reconstructing the initial conditions for hyperbolic equations to some inverse source problems for the Helmholtz equation, see \cite{NguyenLiKlibanov:IPI2019, WangGuoZhangLiu:ip2017, WangGuoLiLiu:ip2017, LiLiuSun:IPI2018, ZhangGuoLiu:ip2018} for some recent results.

In this paper, we employ the technique developed by our own research group. 
The main point of this technique is to derive an approximate model for the Fourier coefficients of the solution to the governing partial differential equation. 
This technique was first introduced in \cite{Klibanov:jiip2017}.
This approximate model is a system of elliptic equations. 
It, together with Cauchy boundary data, is solved by the quasi-reversibility method. 
This approach was used to solve an inverse source problem for Helmholtz equation \cite{NguyenLiKlibanov:IPI2019} and to inverse the Radon transform with incomplete data \cite{KlibanovNguyen:ip2019}.
Especially, Klibanov, Li and Zhang \cite{KlibanovLiZhang:ip2019} used  the convexification method, a stronger version of this technique, to compute numerical solutions to the nonlinear problem of electrical impedance tomography with restricted Dirichlet-to-Neumann map data.
It is remarkable mentioning that the numerical solutions in \cite{KlibanovLiZhang:ip2019} due to the convexification method are impressive.

As mentioned in the previous paragraph, we employ the quasi-reversibility method to solve an approximate model for Fourier coefficients of the solution to \eqref{heat eqn}.
This method was first introduced by Latt\'es and Lions \cite{LattesLions:e1969}. 
It is used to computed numerical solutions to ill-posed problems for partial differential equations. 
Due to its strength, since then, the quasi-reversibility method attracts the great attention of the scientific community
see e.g., \cite{Becacheelal:AIMS2015, Bourgeois:ip2006,
BourgeoisDarde:ip2010, BourgeoisPonomarevDarde:ipi2019, ClasonKlibanov:sjsc2007, Dadre:ipi2016, KaltenbacherRundell:ipi2019,
KlibanovSantosa:SIAMJAM1991, Klibanov:jiipp2013, LocNguyen:ip2019}.
We refer the reader to \cite{Klibanov:anm2015} for a survey
on this method. The solution of the
approximate model in the previous paragraph due to the quasi-reversibility method is called
{\it regularized solution} in the theory of ill-posed problems \cite{Tihkonov:kapg1995}. 
A question arises immediately about the convergence of the quasi-reversibility method:
whether or not the regularized solutions obtained by the quasi-reversibility method converges to the true solution of our system of partial differential equations as the noise tends to $0$.
The affirmative answer to this question is obtained using a general Carleman estimate.
Moreover, we employ a Carleman estimate to prove that the convergence rate is Lipschitz. 
It is important mentioning that in the celebrate paper \cite{BukhgeimKlibanov:smd1981}, Bukhgeim and Klibanov discovered the use of Carleman estimate in studying inverse problems for all three main types of partial differential equations.

The paper is organized as follows. 
In Section \ref{sec ideas}, we describe our approach and propose an algorithm to solve Problem \ref{ISP}.
In Section \ref{sec Carleman}, we employ prove a Carleman estimate.
Then, in Section \ref{sec convergence}, we study the convergence of the quasi-reversibility method as the noise tends to $0$.
Finally, in Section \ref{sec num}, we present all details about the numerical implementation and then show some numerical results from highly noisy simulated data.

\section{The algorithm to solve Problem \ref{ISP}} \label{sec ideas}

We will employ the following basis to introduce an approximation model.

\subsection{An orthonormal basis of $L^2(0, T)$  and the truncated Fourier series} \label{sec basis}

For each $n > 1$, define a complete sequence $\{\phi_n\}_{n = 1}^{\infty}$ in $L^2(0, T)$ with 
\begin{equation}
	\phi_n(t) = (t - t_0)^{n - 1} \exp(t - t_0)
	\label{phi}
\end{equation} 
	 where $t_0 = T/2.$	
Using the Gram-Schmidt orthonormalization for the sequence $\{\phi_n\}_{n = 1}^{\infty}$, we can construct an orthonormal basis of $L^2(0, T),$ named as $\{\Psi_n\}_{n = 1}^{\infty}$. For each $n,$ the function $\Psi_n(t)$ takes the form 
\begin{equation}
	\Psi_n(t) = P_{n - 1}(t - t_0) \exp(t - t_0)
	\label{basis}
\end{equation}
where $P_{n - 1}$ is a polynomial of the $(n - 1)^{\rm th}$ order.
For each $\xx \in \Omega,$ we consider $u(\xx, \cdot)$ as a function with respect to $t$. The Fourier series of this function is 
\begin{equation}
	u(\xx, t) = \sum_{n = 1}^\infty u_n(\xx) \Psi_n(t) 
	\label{Fourier series}
\end{equation}
where
\begin{equation}
	u_n({\bf x}) = \int_{0}^{T} u({\bf x}, t) \Psi_n(t)dt, \quad n = 1, 2, \dots.
	\label{2.2}
\end{equation}
Fix a positive integer $N$. 
We truncate the Fourier series in \eqref{Fourier series}.
 The function $u(\xx, t)$ is approximated by
\begin{equation}
	u({\bf x}, t) = \sum_{n = 1}^N u_n({\bf x}) \Psi_n(t)
	\quad \xx \in \Omega, t \in [0, T].
	\label{2.1111}
\end{equation}
In this context, the partial derivative with respect to $t$ of $u({\bf x}, t)$ is approximated by
\begin{equation}
	 u_t({\bf x}, t) = \sum_{n = 1}^N u_n({\bf x}) \Psi'_n(t) 
	\label{2.1}
\end{equation}
for all ${\bf x} \in \Omega$ and $t \in (0, T).$

To reconstruct the wave field $u({\bf x}, t)$, we compute the Fourier coefficients $u_n({\bf x})$, $1 \leq n \leq N$. 
It is obvious that \eqref{2.1111} and \eqref{2.1} play crucial roles in this step. 
We; therefore, require that the function $\Psi'_n$ cannot be identically $0$. 
The usual ``sin and cosine" basis of the Fourier transform does not meet this requirement while it is not hard to verify from \eqref{basis} that the basis $\{\Psi_n\}_{n = 1}^{\infty}$ does.
The basis $\{\Psi_n\}_{n = 1}^{\infty}$ was first introduced in \cite{Klibanov:jiip2017}. 
Then, this basis was successfully used to solve several important inverse problems, including the inverse source problem for Helmholtz equations \cite{NguyenLiKlibanov:IPI2019}, inverse X-ray tomographic problem in incomplete data \cite{KlibanovNguyen:ip2019} and the nonlinear inverse problem of electrical impedance tomography with restricted Dirichlet to Neumann map data, see \cite{KlibanovLiZhang:ip2019}.


\subsection{An approximate model}

We introduce in this subsection a coupled system of elliptic partial differential equations without the presence of the unknown function $f(\xx)$.
Plugging \eqref{2.1111} and \eqref{2.1} into \eqref{heat eqn},
we have
\begin{equation}
	\sum_{n = 1}^N u_n({\bf x}) \Psi'_n(t) = \sum_{n = 1}^N \calL u_n(\xx) \Psi_n(t).
	\label{2.3}
\end{equation}
for all $\xx \in \Omega$ and $t \in [0, T].$
For each $m \in \{1, \dots, N\}$, multiply $\Psi_m(t)$ to both sides of \eqref{2.3} and then integrating the obtained equation with respect to $t$, we obtain
\begin{equation}
	\sum_{n = 1}^N  \left(\int_0^T\Psi_m(t)\Psi_n'(t)dt\right) u_n(\xx)
	= \sum_{n = 1}^N\left(\int_0^T\Psi_m(t)\Psi_n(t)dt\right) \calL u_n(\xx)	\label{2.4}
\end{equation}
for all $\xx$ in $\Omega.$
Denote by
\begin{equation}
	s_{mn} = \int_0^T \Psi_m(t)\Psi_n'(t) dt
	\label{matrix S}
\end{equation}
and note that
\begin{equation}
	\int_0^T \Psi_m(t)\Psi_n(t) dt 
	= 
	\left\{
		\begin{array}{ll}
			1 &\mbox{if } m = n,\\
			0 &\mbox{if } m \not = n.
		\end{array}	
	\right.
\end{equation}
We rewrite \eqref{2.4} as
\begin{equation}
	\sum_{n = 1}^N s_{mn} u_n(\xx)  = \calL u_m(\xx) \quad \xx \in \Omega, m = 1, 2, \dots, N.
	\label{2.9}
\end{equation}
Denote 
\begin{equation}
	U(\xx) = (u_1(\xx), u_2(\xx), \dots, u_N(\xx))^T.
	\label{capU}
\end{equation} 
It follows from \eqref{2.9} that
\begin{equation}
	 \calL U(\xx) = S U(\xx) \quad \mbox{for } \xx \in \Omega
	 \label{2.10}
\end{equation}
where $S$ is the $N \times N$ matrix whose $mn^{\rm th}$ entry is given in \eqref{matrix S}, $1 \leq m, n \leq N$.
Here, the operator $\calL$ acting on the vector $U(\xx)$ is understood in the same manner as it acts on scalar valued function, see \eqref{calL}.

On the other hand, due to \eqref{2.2} and \eqref{data},
the vector $U(\xx)$ satisfies the boundary conditions
\begin{align}
	U(\xx) &= \widetilde F(\xx) = \left(\int_{0}^T F(\xx, t) \Psi_1(t) dt,  \dots, \int_{0}^T F(\xx, t) \Psi_N(t) dt \right)^T
	\label{2.11}\\
	A \nabla U(\xx) \cdot \nu &= \widetilde G(\xx) = \left(\int_{0}^T G(\xx, t) \Psi_1(t) dt,  \dots, \int_{0}^T G(\xx, t) \Psi_N(t) dt \right)^T
	\label{2.12}
\end{align}
for all $\xx \in \partial \Omega.$

\begin{Remark}
	From now on, we consider $\widetilde F$ and $\widetilde G$ as our ``indirect" boundary data. This is acceptable since these two functions can be computed directly by the algebraic formulas \eqref{2.11} and \eqref{2.12}.
	\label{rem indirect data}
\end{Remark}

Finding a vector $U(\xx)$ satisfying equation \eqref{2.10} and constraints \eqref{2.11} and \eqref{2.12} is the main point in our numerical method to find the function $f(\xx)$. 
In fact, having $U(\xx) = (u_1(\xx), \dots, u_2(\xx), \dots, u_N(\xx))$ in hand, we can compute the function $u(\xx, t)$ via \eqref{2.1111}. The desired function $f(\xx)$ is given by $u(\xx, t = 0).$

	Due to the truncation step in \eqref{2.1111}, equation \eqref{2.10} is not exact. 
	We call it an {\it  approximate model}.  
	Solving it, together with the ``over-determined" boundary conditions \eqref{2.11} and \eqref{2.12}, for the Fourier coefficients $(u_n(\xx))_{n = 1}^N$ of $u(\xx, t)$, $\xx \in \Omega$, $t \in [0, T]$, might not be rigorous.
	In fact, proving the ``accuracy" of  \eqref{2.10} when $N \to \infty$ is extremely challenging
	and is out of the scope of this paper.
	However, we experience in many earlier works that the solution of \eqref{2.10}, \eqref{2.11} and \eqref{2.12} well approximates Fourier coefficients of the function $u(\xx, t)$, leading to good solutions of variety kinds of inverse problems, see \cite{KlibanovKolesov:ip2018, KlibanovLiZhang:ip2019, KlibanovNguyen:ip2019, NguyenLiKlibanov:IPI2019}.

\begin{Remark}[The choice of $N$]
On $\Omega = (-2, 2)^2 $, we arrange $81 \times 81 $ grid points $\{(x_i, y_j): 1\leq i, j \leq 81\}$. 
In Figure \ref{fig 1} displays the functions of $u(\xx, t)$ and its approximation $ \sum_{n = 1}^N u_n(\xx)\Psi_n(t)$ where $u(\xx, t)$ is the true solution of the forward problem and $u_n(\xx)$, $n = 1, \dots, N$, is computed using \eqref{2.2}.
This numerical experiment suggests us to take $N = 30$. 
It is worth mentioning that when $N \leq 25$, the numerical solutions are not satisfactory, when $N = 30$, numerical results are quite accurate regardless the high noise levels and when $N \geq 35,$ the computation is time-consuming.
\begin{figure}[h!]
\begin{center}
	\subfloat[$N = 10$]{
			\includegraphics[width=.3\textwidth]{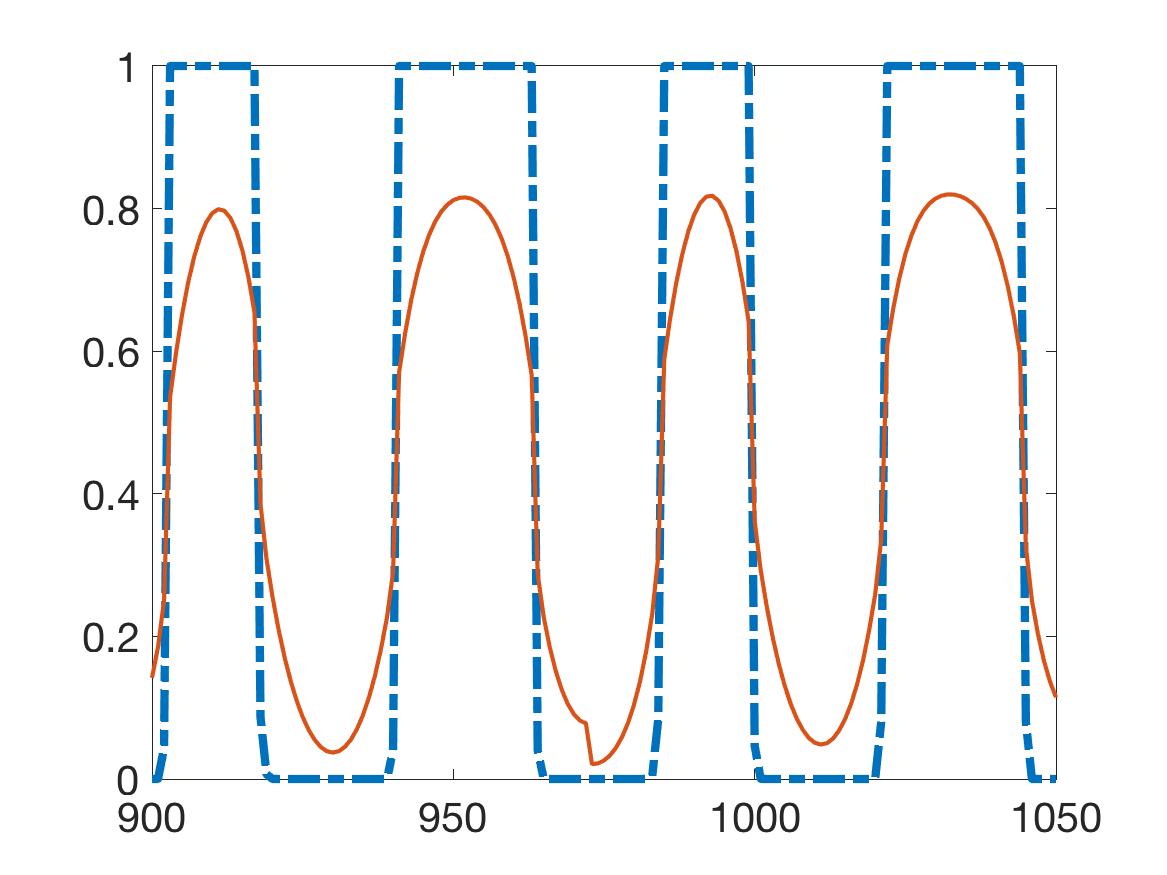}
		}
	\subfloat[$N = 20$]
	{
			\includegraphics[width=.3\textwidth]{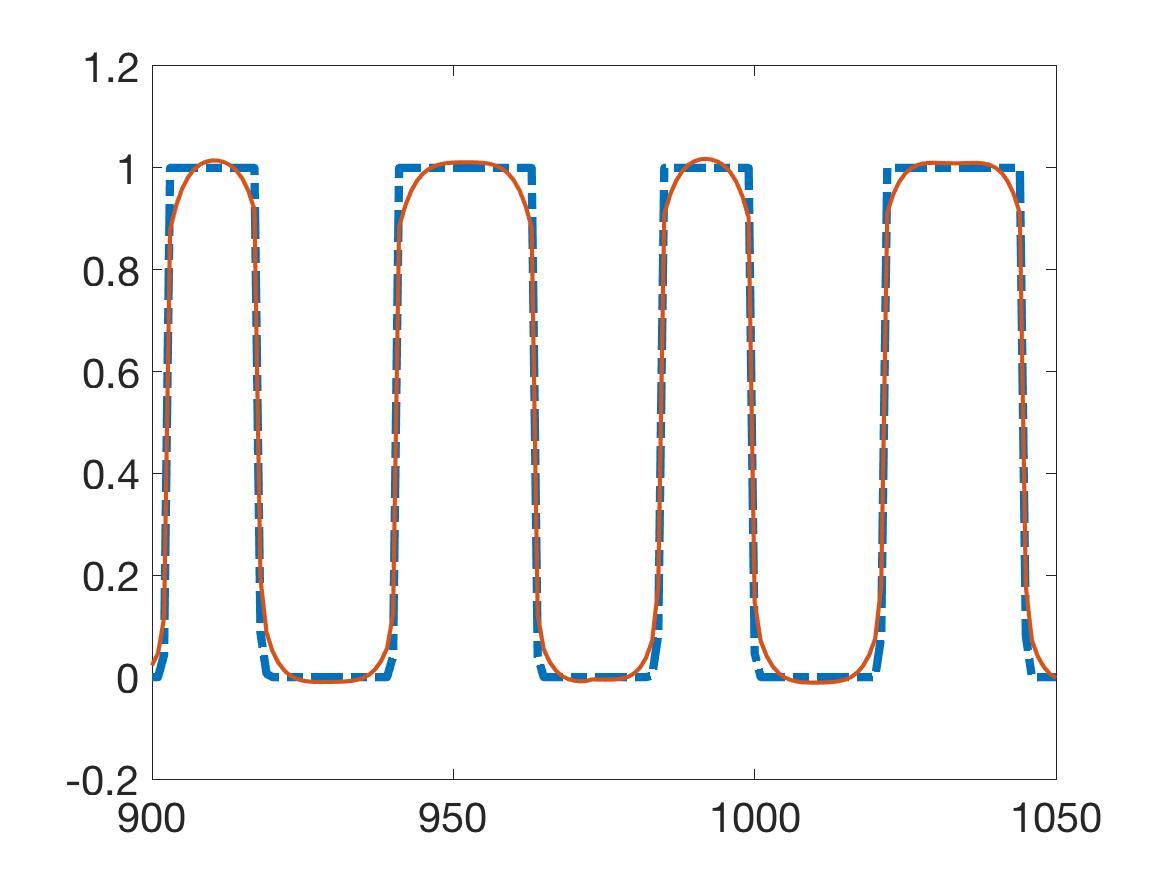}
		}	
	\subfloat[$N = 30$]
	{
			\includegraphics[width=.3\textwidth]{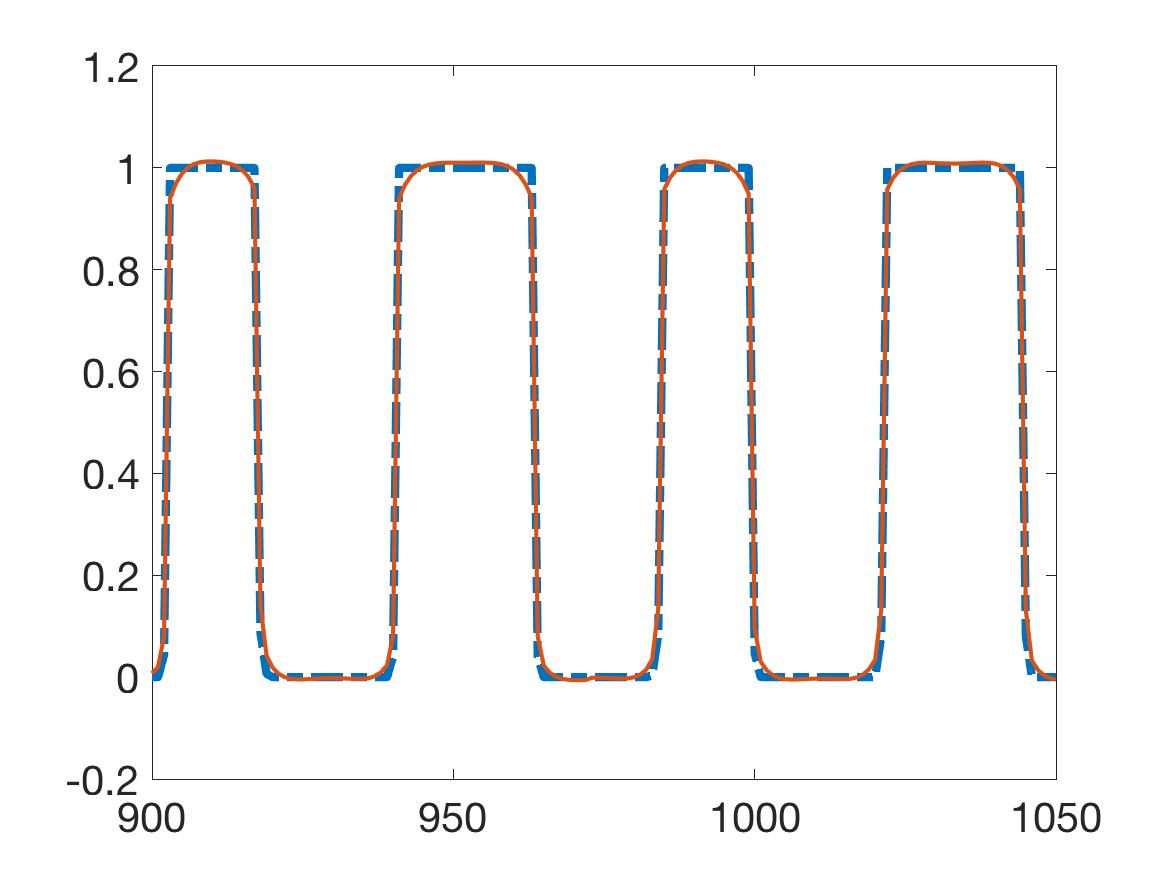}
		}
	\caption{\label{fig 1}The function $u(\xx, t = 0)$ (dash-dot) and its approximation $\sum_{n = 1}^N u_n(\xx) \Psi_n(t = 0)$ (solid) at the points numbered from 900 to 1050. These functions are taken from Test 4 in Section \ref{sec test}. It is evident that the larger $N$, the better approximation for the function $u$ is obtained by the $N^{\rm th}$ partial sum of the Fourier series in \eqref{Fourier series}.}
\end{center}
\end{figure}
\label{rem N}
\end{Remark}

\subsection{The quasi-reversibility method}

As mentioned, our method to solve Problem \ref{ISP} is based on a numerical solver for \eqref{2.10},  \eqref{2.11} and \eqref{2.12}.
We do so by employing the quasi-reversibility method;
that means, we minimize the functional
\begin{equation}
	J_{\epsilon}(U) = \int_{\Omega} |\calL U(\xx) - S U(\xx)|^2 d\xx + \epsilon \|U\|_{H^3(\Omega)}^2.
	\label{def J}
\end{equation}
subject to the constraints \eqref{2.11} and \eqref{2.12}.
Here $\epsilon$ is a positive number serving as a regularization parameter.
Impose the condition that the set of admissible data 
\begin{equation}
	H = \{V \in H^3(\Omega)^N: V|_{\partial \Omega \times [0, T]} = \widetilde F \mbox{ and } A \nabla V \cdot \nu|_{\partial \Omega \times [0, T]} = \widetilde G\} 
	\label{2.14}
\end{equation}
is nonempty, where $\widetilde F$ and $\widetilde G$ are our indirect data, see Remark \ref{rem indirect data}, defined in \eqref{2.11} and \eqref{2.12}.
The result below guarantees the existence and uniqueness for the minimizer of $J_{\epsilon}$, $\epsilon > 0$. 
\begin{proposition}
	Assume that the set of admissible data $H$, defined in \eqref{2.14}, is nonempty. Then, for all $\epsilon > 0$, the functional $J_{\epsilon}$ admits a unique minimizer satisfying \eqref{2.11} and \eqref{2.12}.
	This minimizer is called the regularized solution to \eqref{2.10}, \eqref{2.11} and \eqref{2.12}.
	\label{thm regularize}
\end{proposition}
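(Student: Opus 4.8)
The plan is to recognize $J_{\epsilon}$ as a strictly convex, coercive quadratic functional on an affine constraint set and to extract its unique minimizer from the Lax--Milgram lemma (equivalently the Riesz representation theorem, since the relevant bilinear form is symmetric). The single place where the regularization parameter $\epsilon$ does real work is in supplying coercivity, and I would organize the argument around that observation.

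First I would linearize the constraints. Since $H$ is assumed nonempty, fix one element $W \in H$ and write every admissible vector as $W + \Phi$, where $\Phi$ ranges over the homogeneous space
\begin{equation}
	H_0 = \{\Phi \in H^3(\Omega)^N : \Phi|_{\partial \Omega} = 0 \mbox{ and } A \nabla \Phi \cdot \nu|_{\partial \Omega} = 0\}.
\end{equation}
By the trace theorem the maps $\Phi \mapsto \Phi|_{\partial \Omega}$ and $\Phi \mapsto A \nabla \Phi \cdot \nu|_{\partial \Omega}$ are bounded on $H^3(\Omega)^N$, so $H_0$, being the intersection of their kernels, is a closed subspace of $H^3(\Omega)^N$ and hence a Hilbert space in its own right. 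Minimizing $J_{\epsilon}$ over the affine set $H$ is then equivalent to minimizing $\Phi \mapsto J_{\epsilon}(W + \Phi)$ over $\Phi \in H_0$.

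Next I would expose the quadratic structure. Using that $A \in C^2 \cap L^{\infty}$ and $\bb, c \in L^{\infty}$ (as assumed in \eqref{A} and around \eqref{calL}), the map $U \mapsto \calL U - S U$ is bounded from $H^3(\Omega)^N$ into $L^2(\Omega)^N$, since its highest-order part sends $H^3$ into $H^1 \subset L^2$ and $S$ is a constant matrix. I would then define the symmetric bilinear form
\begin{equation}
	a(U, \widetilde U) = \int_{\Omega} (\calL U - S U) \cdot (\calL \widetilde U - S \widetilde U) \, d\xx + \epsilon \langle U, \widetilde U \rangle_{H^3(\Omega)^N},
\end{equation}
so that $J_{\epsilon}(U) = a(U, U)$. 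Boundedness of $a$ on $H^3(\Omega)^N$ follows from the boundedness of $\calL - S$ just noted together with Cauchy--Schwarz, while coercivity is immediate and is precisely where $\epsilon$ enters: discarding the nonnegative first integral gives $a(U, U) \geq \epsilon \|U\|_{H^3(\Omega)^N}^2$.

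Finally I would expand $J_{\epsilon}(W + \Phi) = a(\Phi, \Phi) + 2 a(W, \Phi) + a(W, W)$. Since $\Phi \mapsto -a(W, \Phi)$ is a bounded linear functional on $H_0$ and $a$ is bounded, symmetric and coercive on $H_0$, the Lax--Milgram lemma produces a unique $\Phi_0 \in H_0$ satisfying $a(\Phi_0, \Phi) = -a(W, \Phi)$ for all $\Phi \in H_0$; this $\Phi_0$ is exactly the minimizer of the restricted functional, so $U_{\epsilon} = W + \Phi_0 \in H$ is the unique minimizer of $J_{\epsilon}$ subject to \eqref{2.11} and \eqref{2.12}. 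Uniqueness can equally be read off from the strict convexity forced by the term $\epsilon \|U\|_{H^3(\Omega)}^2$. I do not anticipate a genuine obstacle in this proof; the only points demanding care are the continuity of the two trace operators on $H^3(\Omega)^N$ (so that $H_0$ is closed) and the boundedness of $\calL - S$ as a map $H^3(\Omega)^N \to L^2(\Omega)^N$, both of which are consequences of the regularity hypotheses on $A$, $\bb$ and $c$.
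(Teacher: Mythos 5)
Your proof is correct and follows essentially the same route as the paper, which does not spell out an argument but instead cites an analogous result (\cite[Theorem 3.1]{NguyenLiKlibanov:IPI2019}) whose proof is likewise based on the Riesz representation theorem --- exactly your Lax--Milgram argument, since the bilinear form $a$ is symmetric. Your write-up simply supplies in full the details (closedness of $H_0$ via the trace theorem, boundedness of $\calL - S$ on $H^3(\Omega)^N$, coercivity from the $\epsilon$-term) that the paper delegates to the citation.
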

\begin{proof}
	Proposition \ref{thm regularize} is an analog of \cite[Theorem 3.1]{NguyenLiKlibanov:IPI2019} whose proof is based on the Riesz representation theorem. 
	An alternative method to prove this proposition is
	 from the standard argument in convex analysis, see e.g. \cite{KlibanovLoc:ipi2016, LocNguyen:ip2019}.
\end{proof}	

The minimizer of $J_{\epsilon}$ in $H$ is called the {\it regularized solution} of \eqref{2.10}, \eqref{2.11} and \eqref{2.12} obtained by the quasi-reversibility method.

The analysis above leads to Algorithm \ref{alg1}, which describes our numerical method to reconstruct the function $f(\xx)$, $\xx \in \Omega$.
In the next section, we establish a new Carleman estimate. 
This estimate plays an important role in proving the convergence of the regularized solution, due to the quasi-reversibility method, to the true solution of \eqref{2.10}, \eqref{2.11} and \eqref{2.12} in Section \ref{sec convergence} as the measurement noise and $\epsilon$ tend to $0$.
\begin{algorithm}[h!]
\caption{\label{alg1}The procedure to solve Problem \ref{ISP}}
	\begin{algorithmic}[1]
	\State\, Choose a number $N$. Construct the functions $\Psi_m$, $1 \leq m \leq N,$ in Section \ref{sec basis} and compute the matrix $S$ whose the $mn^{\rm th}$ entry is given in \ref{matrix S}.
	\State\,  Calculate the boundary data $\widetilde F$ and $\widetilde G$ for the vector valued function $U$ via \eqref{2.11} and \eqref{2.12} respectively.
	\State\, \label{step quasi} Solve  \eqref{2.10}, \eqref{2.11}) and \eqref{2.12} via the quasi-reversibility method for the vector \[U(\xx) = (u_1(\xx), \dots, u_N(\xx))^T \quad \xx \in \Omega.\]
	\State\, Compute $u(\xx, t)$, $(\xx, t) \in \Omega \times [0, T]$ using \eqref{Fourier series}.
	\State\, Set the desired function $f(\xx) = u(\xx, 0)$ for all $\xx \in \Omega.$
	\end{algorithmic}
\end{algorithm}

\section{A Carleman estimate for second order elliptic operators on general domains} \label{sec Carleman}

Let the matrix $A$ be as in \eqref{A}. 
The main aim of this section is to prove a Carleman estimate in a general domain $\Omega$. 
Similar versions of Carleman estimate can be found in \cite[Theorem 3.1]{KlibanovLiZhang:ip2019} and \cite[Lemma 5]{MinhLoc:tams2015} when $\Omega$ is an annulus 
and \cite[Theorem 4.1]{NguyenLiKlibanov:IPI2019} and  when $\Omega$ is  a cube.
In this paper, we will use the following estimate to derive the convergence of the quasi-reversibility method. It can be deduced from \cite[Lemma 3, Chapter 4, \S 1]{Lavrentiev:AMS1986}.

Without lost of generality, we can assume that
\begin{equation}
	\Omega \subset 
	\left\{
		\xx = (x_1, x_2, \dots, x_d): 0 < x_1 + X^{-2} \sum_{i = 2}^d x_i^2 < 1
	\right\}
	\label{Omega bounded}
\end{equation}
for some $0 < X < 1$.
Define the function
\begin{equation}
	\psi(\xx) = x_1 + \frac{1}{2 X^2} \sum_{i = 1}^d x_i^2 + \alpha, \quad 0 < \alpha < 1/2.
	\label{psi def}
\end{equation}
Using Lemma 3 in \cite[Chapter 4, \S 1]{Lavrentiev:AMS1986} for the function $u \in C^2(\overline \Omega)$ that is independent of the time variable, we can find a constant $\sigma_0$ and a constant $\sigma_1$ (depending only on $\alpha$ and the entries $a_{ij}$, $1 \leq i, j \leq d$, of the matrix $A$) such that for all $\lambda \geq \sigma_0$ and $p > \sigma_1$
\begin{multline}
	\frac{\lambda p}{X^2} e^{2\lambda \psi^{-p}(\xx)} |\nabla u|^2 + \lambda^3 p^4 \psi^{-2p - 2}e^{2\lambda \psi^{-p}(\xx)} |u|^2
	\leq 
	-\frac{C \lambda p}{X^2} e^{2\lambda \psi^{-p}(\xx)}
	u\Div(A\nabla u)
	\\
	+ C \psi^{p + 2} e^{2\lambda \psi^{-p}(\xx)} |\Div (A\nabla u)|^2 + \Div U
	\label{LRS}
\end{multline}
for all $\xx \in \Omega$ where the vector $U$ satisfies 
\begin{equation}
	|U| \leq C e^{2\lambda \psi^{-p}(\xx)} \left(
		\frac{\lambda p}{X} |\nabla u|^2 + \frac{\lambda^3 p^3}{X^3} \psi^{-2p - 2} u^2
	\right).
	\label{LRS1}
\end{equation}
Applying \eqref{LRS} and \eqref{LRS1}, we have the lemma.

\begin{lemma}[Carleman estimate]
	Let $u \in C^2(\overline \Omega)$ satisfying \begin{equation}
		u|_{\partial \Omega} = A\nabla u \cdot \nu = 0 \quad \mbox{on } \partial \Omega
	\label{homo boundary}
	\end{equation}
	where $\nu$ the outward unit normal vector of $\partial \Omega.$
	Then, there exist a positive number $\sigma_0$ and $\sigma_1$, depending only on $\alpha$ and $A$, such that
	\begin{multline}
		\frac{\lambda p}{X^2}\int_{\Omega} e^{2\lambda \psi^{-p}(\xx)} |\nabla u|^2 d\xx 
		+ \lambda^3 p^4\int_{\Omega} \psi^{-2p - 2}e^{2\lambda \psi^{-p}(\xx)} |u|^2 d\xx
		\\
		\leq 	 C\int_{\Omega}  \psi^{p + 2} e^{2\lambda \psi^{-p}(\xx)} |\Div (A\nabla u)|^2 d\xx
		\label{Car es1}
	\end{multline}
	for $\lambda > \sigma_0$ and $p > \sigma_1$.
	In particular, fixing $p > \sigma_1$, one can find $\lambda > \sigma_0$ such that
	 \begin{multline}
		\lambda p\int_{\Omega} e^{2\lambda \psi^{-p}(\xx)} |\nabla u|^2 d\xx 
		+ \lambda^3 p^4\int_{\Omega} e^{2\lambda \psi^{-p}(\xx)} |u|^2 d\xx
		\\
		\leq 	 C\int_{\Omega}   e^{2\lambda \psi^{-p}(\xx)} |\Div (A\nabla u)|^2 d\xx.
		\label{Car es}
	\end{multline}
	\label{thm Car}
	\end{lemma}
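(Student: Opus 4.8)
The plan is to obtain \eqref{Car es1} by integrating the pointwise inequality \eqref{LRS} over $\Omega$, eliminating the divergence term by means of the boundary conditions \eqref{homo boundary}, and absorbing the one indefinite (cross) term via a weighted Young inequality; the simplified estimate \eqref{Car es} then drops out by fixing $p$ and using that $\psi$ is trapped between positive constants on $\overline\Omega$. Throughout I treat the pointwise bounds \eqref{LRS} and \eqref{LRS1} as given.

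First I would integrate \eqref{LRS} over $\Omega$. Every resulting term is already in the form appearing in \eqref{Car es1} except $\int_\Omega \Div U\, d\xx$, which the divergence theorem converts to $\int_{\partial\Omega} U\cdot\nu\, dS$. I claim this boundary integral vanishes. Since $u\equiv 0$ on the smooth hypersurface $\partial\Omega$, all tangential derivatives of $u$ vanish there, so the gradient is purely normal on $\partial\Omega$, i.e. $\nabla u = (\partial_\nu u)\,\nu$. The second Cauchy condition then gives $0 = A\nabla u\cdot\nu = (\partial_\nu u)\,(A\nu\cdot\nu)$, and the uniform ellipticity \eqref{coercive} yields $A\nu\cdot\nu\geq\mu>0$, forcing $\partial_\nu u = 0$ and hence $\nabla u = 0$ on $\partial\Omega$. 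Inserting $u=0$ and $\nabla u=0$ into \eqref{LRS1} shows $U=0$ on $\partial\Omega$, so $\int_\Omega \Div U\, d\xx = 0$. This is the step where the overdetermined data \eqref{homo boundary} is genuinely used, and I expect it to be the main conceptual point of the argument.

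After discarding the divergence term I am left, writing $g=\Div(A\nabla u)$, with the single indefinite contribution $-\frac{C\lambda p}{X^2}\int_\Omega e^{2\lambda \psi^{-p}}u\,g\,d\xx$. Applying the weighted Young inequality $|\beta u g|\le \tfrac12\alpha u^2 + \tfrac{\beta^2}{2\alpha}g^2$ with $\beta=\frac{C\lambda p}{X^2}e^{2\lambda\psi^{-p}}$ and the weight $\alpha=\lambda^3 p^4\psi^{-2p-2}e^{2\lambda\psi^{-p}}$ gives
\[
\left|\frac{C\lambda p}{X^2}\int_\Omega e^{2\lambda\psi^{-p}}u\,g\,d\xx\right|
\le \frac12\lambda^3 p^4\int_\Omega \psi^{-2p-2}e^{2\lambda\psi^{-p}}u^2\,d\xx
+\frac{C^2}{2X^4\lambda p^2}\int_\Omega \psi^{2p+2}e^{2\lambda\psi^{-p}}g^2\,d\xx.
\]
The first term is half of the $u^2$-term on the left of \eqref{Car es1}, hence is absorbed there. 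For the second term I would factor $\psi^{2p+2}=\psi^{p}\cdot\psi^{p+2}$ and use the boundedness of $\psi$ on $\overline\Omega$ together with $\lambda p^2\ge 1$ to bound it by $C\int_\Omega \psi^{p+2}e^{2\lambda\psi^{-p}}g^2\,d\xx$, which merges into the right-hand side of \eqref{Car es1}. Arranging the powers of $\psi$ so that the leftover lands exactly on $\psi^{p+2}e^{2\lambda\psi^{-p}}|\Div(A\nabla u)|^2$ is the one place demanding care; the gain $\lambda^{-1}$ from the weighted splitting makes the absorption unconditional once $\lambda$ is large. The surviving coefficient $\tfrac12\lambda^3 p^4$ on the left is restored to $\lambda^3 p^4$ at the cost of enlarging $C$ by a fixed factor, which yields \eqref{Car es1}.

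Finally, for \eqref{Car es} I would fix $p>\sigma_1$. Then $\psi$ takes values in a compact subinterval of $(0,\infty)$ on $\overline\Omega$, so $\psi^{-2p-2}\ge c_1>0$ and $\psi^{p+2}\le c_2$ for constants $c_1,c_2$ depending only on $p$, $A$ and $\alpha$. Bounding the left side of \eqref{Car es1} below by $\min(1,c_1)$ times the left side of \eqref{Car es} (also using $X^{-2}\ge 1$), and the right side above by $c_2\,C$ times the right side of \eqref{Car es}, produces \eqref{Car es} with a new constant, valid for every $\lambda>\sigma_0$.
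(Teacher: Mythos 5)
Your proposal is correct and follows essentially the same route as the paper: the same argument that the Cauchy conditions \eqref{homo boundary} together with the ellipticity \eqref{coercive} force $\nabla u = 0$ on $\partial \Omega$ (so the divergence term drops after integrating \eqref{LRS}, by \eqref{LRS1} and the divergence theorem), followed by absorption of the indefinite cross term into the left-hand side. The only cosmetic difference is that you absorb it via a $\psi$-weighted Young inequality, whereas the paper uses the splitting $|ab| \le \lambda p\, a^2 + \frac{1}{2\lambda p} b^2$ and then fixes $p$ and takes $\lambda$ large so that the $\lambda^3 p^4 \psi^{-2p-2}$ term dominates; both variants hinge on the same final choice of $\lambda$ large relative to the fixed $p$.
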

\begin{proof}
	We claim that 
	\begin{equation}
		\nabla u(\xx) = 0 
		\quad 
		\mbox{for all } \xx \in \partial \Omega. 
		\label{no boundary}
	\end{equation}
	In fact, assume that $\nabla u(\xx) \not = 0$ at some points $\xx \in \partial\Omega.$ 
	Since $u(\xx) = 0$ on $\partial \Omega$, see \eqref{homo boundary},  $\nabla u(\xx) \cdot \tau(\xx) = 0$ where $\tau(\xx)$ is any tangent vector to $\partial \Omega$ at the point $\xx$. 
	Thus, $\nabla u(\xx)$ is perpendicular to $\partial \Omega$ at $\xx$. In other words, $  \nabla u(\xx) = \theta \nu(\xx)$ for some nonzero scalar $\theta$.
	 We have $0 = A(\xx) \nabla u(\xx) \cdot \nu(\xx) = \theta A(\xx)  \nu(\xx) \cdot \nu(\xx)$, which is a contradiction  to \eqref{coercive}.
	 
	Integrating both sides of \eqref{LRS}, we have
	\begin{multline}
		\frac{\lambda p}{X^2}\int_{\Omega} e^{2\lambda \psi^{-p}(\xx)} |\nabla u|^2 d\xx 
		+ \lambda^3 p^4\int_{\Omega} \psi^{-2p - 2}e^{2\lambda \psi^{-p}(\xx)} |u|^2 d\xx
		\\
		\leq  -\frac{C \lambda p}{X^2}\int_{\Omega} e^{2\lambda \psi^{-p}(\xx)}
	u\Div(A\nabla u) d\xx
	+ C\int_{\Omega}  \psi^{p + 2} e^{2\lambda \psi^{-p}(\xx)} |\Div (A\nabla u)|^2 d\xx.
	\label{3.5}
	\end{multline}
	Here, the term $\ds \int_{\Omega} \Div U d\xx$ is dropped because it vanishes due the the divergence theorem, \eqref{homo boundary} and \eqref{no boundary}
	Using the inequality $|ab| \leq \lambda p a^2 + \frac{1}{2\lambda p} b^2$
	\begin{multline}
		-\frac{C \lambda p}{X^2}\int_{\Omega} e^{2\lambda \psi^{-p}(\xx)}
	u\Div(A\nabla u) d\xx
	\\
	\leq 
	\frac{C \lambda^2 p^2}{X^2} \int_{\Omega} e^{2\lambda \psi^{-p}(\xx)} u^2 d\xx
	+
	\frac{C}{ X^2} \int_{\Omega} e^{2\lambda \psi^{-p}(\xx)} |\Div (A\nabla u)|^2d\xx.
	\label{3.6}
	\end{multline}
	Combining \eqref{3.5} and \eqref{3.6},
	we obtain
	\begin{multline*}
		\frac{\lambda p}{X^2}\int_{\Omega} e^{2\lambda \psi^{-p}(\xx)} |\nabla u|^2 d\xx 
		+ \lambda^3 p^4\int_{\Omega} \psi^{-2p - 2}e^{2\lambda \psi^{-p}(\xx)} |u|^2 d\xx
		\\
		\leq  
	\frac{C \lambda^2 p^2}{X^2} \int_{\Omega} e^{2\lambda \psi^{-p}(\xx)} u^2 d\xx
	+
	\frac{C}{ X^2} \int_{\Omega} e^{2\lambda \psi^{-p}(\xx)} |\Div (A\nabla u)|^2d\xx	
	\\
	+ C\int_{\Omega}  \psi^{p + 2} e^{2\lambda \psi^{-p}(\xx)} |\Div (A\nabla u)|^2 d\xx.
	\end{multline*}
	Fixing $p >  \sigma_1$ and choosing $\lambda$ large such that the second term in the left hand side dominates the first term in the right hand side, we obtain	
	\begin{multline*}
		\frac{\lambda p}{X^2}\int_{\Omega} e^{2\lambda \psi^{-p}(\xx)} |\nabla u|^2 d\xx 
		+ \lambda^3 p^4\int_{\Omega} \psi^{-2p - 2}e^{2\lambda \psi^{-p}(\xx)} |u|^2 d\xx
		\\
		\leq 	 C\int_{\Omega}  \psi^{p + 2} e^{2\lambda \psi^{-p}(\xx)} |\Div (A\nabla u)|^2 d\xx.
	\end{multline*}
	The estimate \eqref{Car es1} follows.
\end{proof}

%
%

\section{The convergence of the quasi-reversibility method}
\label{sec convergence}

In this section, we continue to assume \eqref{Omega bounded}.
Let $\widetilde F^*$ and $\widetilde G^*$ be the noiseless data for \eqref{2.11} and \eqref{2.12}, see Remark \eqref{rem indirect data}, respectively. 
The noisy data are denoted by $\widetilde F^{\delta}$ and $\widetilde G^{\delta}$. 
Here $\delta$ is the noise level. 
In this section, assume that there exists $\calE \in H^3(\Omega)^N$ such that 
\begin{enumerate}
\item for all $\xx \in \Omega$ 
\begin{equation}
	\calE(\xx) = \widetilde F^{\delta}(\xx) - \widetilde F^*(\xx) \quad \mbox{and } \quad A(\xx) \nabla\calE(\xx) \cdot \nu(\xx) = \widetilde G^{\delta}(\xx) - \widetilde G^*(\xx);
	\label{error fns}
\end{equation}
\item and the bound
\begin{equation}
	\|\calE\|_{H^3(\Omega)^N}  < \delta \quad \mbox{as } \delta \to 0^+
	\label{noise level}
\end{equation}
holds true.
\end{enumerate}
The assumption about the existence of $\calE$ satisfying \eqref{error fns} and \eqref{noise level} is equivalent to the condition
\[
	\inf\{\|\Phi\|_{H^3(\Omega)^N}: \Phi|_{\partial \Omega} =  \widetilde F^{\delta} - \widetilde F^*, \partial_{\nu} \Phi|_{\partial \Omega} = \widetilde G^{\delta} - \widetilde G^*\} < \delta.
\] 
In this section,
we establish the following result to study the accuracy of the quasi-reversibility method.
\begin{theorem}
	Assume that $U^*$ is the function that satisfies \eqref{2.10}, \eqref{2.11} and \eqref{2.12} with $\widetilde F$ and $\widetilde G$ replaced by $\widetilde F^*$ and $\widetilde G^*$ respectively. 
	Fix $\epsilon > 0.$ Let $U^{\delta}$ be the minimizer of $J_{\epsilon}$ subject to constraints \eqref{2.11} and \eqref{2.12} with $\widetilde F$ and $\widetilde G$ replaced by $\widetilde F^\delta$ and $\widetilde G^\delta$ respectively. 
	Assume further that there is an ``error" function $\calE$  in $H^3(\Omega)^N$ satisfying \eqref{error fns} and \eqref{noise level}.
	Then, we have the estimate
	\begin{equation}
		\|U^\delta - U^*\|_{H^1(\Omega)^N}^2 \leq C\left(\delta^2 + \epsilon \|U^*\|_{H^3(\Omega)^N}^2\right)
	\label{convergence QRM}
	\end{equation}
	where $C$ is a constant that depends only on $\Omega,$ $\|A\|_{C^1(\overline \Omega)}$ and $\mu$.
\end{theorem}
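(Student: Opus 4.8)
The plan is to reduce \eqref{convergence QRM} to two ingredients: a conditional stability estimate coming from the Carleman estimate of Lemma \ref{thm Car}, and the minimality of $U^\delta$ tested against a carefully chosen admissible competitor. The crucial preliminary observation is that the raw difference $U^\delta - U^*$ does \emph{not} have homogeneous Cauchy data on $\partial\Omega$, but the corrected difference
\[
V = U^\delta - U^* - \calE
\]
does. Indeed, by \eqref{2.11}, \eqref{2.12} and \eqref{error fns}, on $\partial\Omega$ one has $U^\delta|_{\partial\Omega} = \widetilde F^\delta$, $U^*|_{\partial\Omega} = \widetilde F^*$ and $\calE|_{\partial\Omega} = \widetilde F^\delta - \widetilde F^*$, so that $V|_{\partial\Omega} = 0$; the same computation for the conormal derivatives gives $A\nabla V\cdot\nu|_{\partial\Omega} = 0$. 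Hence each scalar component of $V$ satisfies the homogeneous boundary condition \eqref{homo boundary} and the Carleman estimate \eqref{Car es} applies componentwise.

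First I would apply \eqref{Car es} to each component of $V$ and sum, obtaining a weighted bound for $V$ in terms of $\int_\Omega e^{2\lambda\psi^{-p}}|\Div(A\nabla V)|^2\,d\xx$. Writing $\Div(A\nabla V) = (\calL V - SV) + SV - \bb\cdot\nabla V - cV$ and using that $S$ is a constant matrix while $\bb,c$ are bounded, the right-hand side is controlled by $C\int_\Omega e^{2\lambda\psi^{-p}}|\calL V - SV|^2\,d\xx$ plus the lower-order terms $C\int_\Omega e^{2\lambda\psi^{-p}}(|V|^2 + |\nabla V|^2)\,d\xx$. Because the left-hand side of \eqref{Car es} carries the large factors $\lambda p$ and $\lambda^3 p^4$, for $\lambda$ large enough these lower-order terms are absorbed into the left. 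Fixing $\lambda$ and $p$, the weight $e^{2\lambda\psi^{-p}}$ is bounded above and below by positive constants on $\overline\Omega$, so the weighted inequality collapses to the clean conditional stability estimate
\[
\|V\|_{H^1(\Omega)^N}^2 \leq C\|\calL V - SV\|_{L^2(\Omega)^N}^2 ,
\]
with $C$ depending only on $\Omega$, $\|A\|_{C^1(\overline\Omega)}$ and $\mu$ through the Carleman parameters $\sigma_0,\sigma_1$ and the fixed weight.

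Next I would estimate the residual. Since $U^*$ solves \eqref{2.10} exactly, $\calL V - SV = (\calL U^\delta - SU^\delta) - (\calL\calE - S\calE)$, whence
\[
\|\calL V - SV\|_{L^2}^2 \leq 2\|\calL U^\delta - SU^\delta\|_{L^2}^2 + 2\|\calL\calE - S\calE\|_{L^2}^2 \leq 2\|\calL U^\delta - SU^\delta\|_{L^2}^2 + C\delta^2,
\]
the last step using \eqref{noise level} and the continuity of $\calL$ from $H^2$ to $L^2$. The first term is controlled by minimality: the competitor $U^* + \calE$ is admissible for the noisy constraints, since $(U^*+\calE)|_{\partial\Omega} = \widetilde F^\delta$ and $A\nabla(U^*+\calE)\cdot\nu|_{\partial\Omega} = \widetilde G^\delta$. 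Hence $J_\epsilon(U^\delta) \leq J_\epsilon(U^*+\calE)$; as $\calL(U^*+\calE) - S(U^*+\calE) = \calL\calE - S\calE$, the right-hand side is at most $C\delta^2 + 2\epsilon\|U^*\|_{H^3}^2 + 2\epsilon\|\calE\|_{H^3}^2 \leq C(\delta^2 + \epsilon\|U^*\|_{H^3}^2)$. Dropping the nonnegative regularization term from $J_\epsilon(U^\delta)$ gives $\|\calL U^\delta - SU^\delta\|_{L^2}^2 \leq C(\delta^2 + \epsilon\|U^*\|_{H^3}^2)$.

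Combining the three estimates yields $\|V\|_{H^1}^2 \leq C(\delta^2 + \epsilon\|U^*\|_{H^3}^2)$, and since $U^\delta - U^* = V + \calE$ with $\|\calE\|_{H^1} \leq \|\calE\|_{H^3} < \delta$, the triangle inequality delivers \eqref{convergence QRM}. I expect the main obstacle to be the absorption step: one must check that expanding $\Div(A\nabla V)$ produces only terms of order $|\nabla V|^2$ and $|V|^2$ that can genuinely be dominated by the $\lambda p$ and $\lambda^3 p^4$ factors on the left of \eqref{Car es}, and that the constant surviving after fixing $\lambda,p$ and reverting the weight to a harmless constant depends only on the quantities claimed. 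A secondary technical point is that \eqref{Car es} is stated for $C^2(\overline\Omega)$ functions whereas $V \in H^3(\Omega)^N$; this is handled by a routine density argument, since an $H^3$ function with homogeneous Cauchy data can be approximated in $H^2$ by such $C^2$ functions.
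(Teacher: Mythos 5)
Your proof is correct, and it reaches \eqref{convergence QRM} by a genuinely different route in the step that controls the residual. The paper works with the first-order optimality (variational) identity at the minimizer: it tests $\langle \calL U^\delta - SU^\delta, \calL\Phi - S\Phi\rangle_{L^2(\Omega)^N} + \epsilon\langle U^\delta, \Phi\rangle_{H^3(\Omega)^N} = 0$ with the same corrected difference $\Phi = U^\delta - U^* - \calE$ that you call $V$, subtracts $\calL U^* - SU^* = 0$, and after Cauchy--Schwarz obtains $\|\calL\Phi - S\Phi\|_{L^2(\Omega)^N}^2 + \epsilon\|\Phi\|_{H^3(\Omega)^N}^2 \leq C\left(\delta^2 + \epsilon\|U^*\|_{H^3(\Omega)^N}^2\right)$ in one stroke. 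You instead compare energies directly, $J_\epsilon(U^\delta) \leq J_\epsilon(U^* + \calE)$, using that $U^* + \calE$ is admissible for the noisy constraints and that $\calL(U^*+\calE) - S(U^*+\calE) = \calL\calE - S\calE$, and then split $\calL V - SV$ by the triangle inequality. The two devices are equivalent in strength here: both yield a residual bound of order $\delta^2 + \epsilon\|U^*\|_{H^3(\Omega)^N}^2$, and both are then fed into the identical Carleman-based conditional stability estimate $\|V\|_{H^1(\Omega)^N}^2 \leq C\|\calL V - SV\|_{L^2(\Omega)^N}^2$, which you derive exactly as the paper does (absorbing $\bb\cdot\nabla V$ and $(c\,\Id - S)V$ by enlarging $\lambda$, then freezing the weight between its extrema on $\overline\Omega$). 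What your route buys: it is more elementary, requires neither differentiability of $J_\epsilon$ nor uniqueness of the minimizer (only $J_\epsilon(U^\delta) \leq J_\epsilon(U^*+\calE)$), and it makes explicit the admissibility fact $U^*+\calE \in H$ that the paper leaves implicit. What the variational route buys: the term $\epsilon\|\Phi\|_{H^3(\Omega)^N}^2$ survives on the favorable side of the inequality, which would matter if one wanted a bound on the corrected difference in a stronger norm, though it is discarded for \eqref{convergence QRM}. Two of your side remarks are worth retaining: the density argument for applying \eqref{Car es} to $V \in H^3(\Omega)^N$ with homogeneous Cauchy data is genuinely needed (and is tacitly needed by the paper as well, since Lemma \ref{thm Car} is stated for $C^2(\overline\Omega)$ functions), and in both proofs the constant in fact also depends on $\|\bb\|_{L^\infty}$, $\|c\|_{L^\infty}$ and the matrix $S$ (hence on $N$ and $T$) through the absorption step, not only on $\Omega$, $\|A\|_{C^1(\overline \Omega)}$ and $\mu$ as the statement advertises.
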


\begin{proof}
	Since $U^{\delta}$ is the minimizer of $J_{\epsilon}$, by the variational principle, we have
	\begin{equation}
		\langle \calL U^\delta - S U^{\delta}, \calL \Phi - S \Phi\rangle_{L^2(\Omega)^N} 
		+ 
		\epsilon \langle U^{\delta}, \Phi \rangle_{H^3(\Omega)^N} = 0
		\label{4.3}
	\end{equation}
	for all test functions $\Phi$ in the space
	\[
		H_0^3(\Omega)^N = \{
			\phi \in H^3(\Omega)^N: \phi = A\nabla \phi \cdot \nu = 0 \mbox{ on } \partial \Omega
		\}.
	\]
	Since $\calL U^* - S U^* = 0,$ we can deduce from \eqref{4.3} that
	\[
		\langle \calL (U^\delta - U^*) - S (U^{\delta} - U^*), \calL \Phi - S \Phi\rangle_{L^2(\Omega)^N} 
		+ 
		\epsilon \langle U^{\delta} - U^*, \Phi \rangle_{H^3(\Omega)^N} = -\epsilon \langle  U^*, \Phi \rangle_{H^3(\Omega)^N}.
	\]
	Plugging the test function 
	\begin{equation}
		\Phi = U^{\delta} - U^* - \calE \in H^3_0(\Omega)
		\label{phi}
	\end{equation}
	 into the identity above, we have
	\begin{multline*}
		\|\calL \Phi - S\Phi\|_{L^2(\Omega)^N}^2
		+\epsilon \|\Phi\|_{H^3(\Omega)^N}^2
		= -\langle \calL \calE - S\calE, \calL \Phi - S\Phi \rangle_{L^2(\Omega)^N}^2 
		\\
		-\epsilon \langle \calE, \Phi\rangle_{H^3(\Omega)^N} 
		-\epsilon \langle  U^*, \Phi \rangle_{H^3(\Omega)^N}.
	\end{multline*}	
	Applying the  Cauchy--Schwartz inequality and removing lower order terms, we obtain
	\begin{equation}
		\|\calL \Phi - S \Phi\|_{L^2(\Omega)^N}^2 
		+ \epsilon \|\Phi\|_{H^3(\Omega)^N}^2 \leq C\left(\delta^2 + \epsilon \|U^*\|_{H^3(\Omega)^N}^2\right). 
	\label{4.4}
	\end{equation}
	Recall from \eqref{calL} that
	\[
		\|\calL \Phi - S \Phi\|_{L^2(\Omega)^N}^2 
		= \|\Div (A(\xx) \nabla \Phi + \bb(\xx) \cdot \nabla \Phi + (c(\xx)\Id - S)\Phi\|^2_{L^2(\Omega)^N}
	\]
	Recall the function $\psi$ in \eqref{psi def}. 
	Fix $\lambda > \sigma_0$ and $p > \sigma_1$ as in Lemma \ref{thm Car}. 
	Set 
	\[
		M = \max\{e^{2\lambda \psi^{-p}(
		\xx)}: \xx \in \overline\Omega\}
		\quad
		\mbox{and }
		\quad
		m = \min\{e^{2\lambda \psi^{-p}(
		\xx)}: \xx \in \overline\Omega\}.
	\] 
	We have
	\begin{multline*}
		M\|\calL \Phi - S \Phi\|_{L^2(\Omega)^N}^2 
		\geq
		\frac{1}{2}\big\|e^{\lambda \psi^{-p}(\xx)}\Div (A(\xx) \nabla \Phi)
		\\
		+ e^{\lambda \psi^{-p}(\xx)}(\bb(\xx) \cdot \nabla \Phi + (c(\xx)\Id - S)\Phi)
		\big\|^2_{L^2(\Omega)^N}.
	\end{multline*}
	Using the inequality $(x - y)^2 \geq \frac{1}{2} x^2 - y^2$, we have
	\begin{multline*}
		M\|\calL \Phi - S \Phi\|_{L^2(\Omega)^N}^2 
		\geq
		\frac{1}{2}\|e^{\lambda \psi^{-p}(\xx)}\Div (A(\xx) \nabla \Phi)\|_{L^2(\Omega)^N}^2 
		\\
		- \|e^{\lambda \psi^{-p}(\xx)}(\bb(\xx) \cdot \nabla \Phi + (c(\xx)\Id - S)\Phi)\|^2_{L^2(\Omega)^N}.
	\end{multline*}
Hence,
	Thus, by \eqref{Car es},
	\begin{multline}
		M\|\calL \Phi - S \Phi\|_{L^2(\Omega)^N}^2 
		\geq
		C\lambda p\int_{\Omega} e^{2\lambda \psi^{-p}(\xx)} |\nabla \Phi|^2 d\xx 
		+ C\lambda^3 p^4\int_{\Omega} e^{2\lambda \psi^{-p}(\xx)} |\Phi|^2 d\xx
		\\
		- \|e^{\lambda \psi^{-p}(\xx)}(\bb(\xx) \cdot \nabla \Phi + (c(\xx)\Id - S)\Phi)\|^2_{L^2(\Omega)^N}.
		\label{39}
	\end{multline}
	Now, fixing $\lambda > \sigma_0$ large, we obtain from \eqref{39} that
	\begin{equation}
		M|\calL \Phi - S \Phi\|_{L^2(\Omega)^N}^2 \geq C m\|\Phi\|_{H^1(\Omega)^N}.
		\label{4.5}
	\end{equation}
	Here, we have used the boundedness of $\bb$ and $c$ in $\Omega.$
	Combining \eqref{phi}, \eqref{4.4} and \eqref{4.5} gives
	\[
		\|U^\delta - U^* - \calE\|^2_{H^1(\Omega)^N} \leq C \left(\delta^2 + \epsilon \|U^*\|_{H^3(\Omega)^N}^2\right).
	\]
	This and the assumption $\|\calE\|_{H^3(\Omega)^N} \leq C\delta$ imply inequality \eqref{convergence QRM}.
\end{proof}	

\begin{corollary}
	Let $f^*(\xx) = u^*(\xx, 0)$ and $f^{\delta}(\xx) = u^\delta(\xx, 0)$ where $u^*(\xx, t)$ and $u^{\delta}(\xx, t)$ are computed from $U^*(\xx)$ and $U^{\delta}(\xx)$ via \eqref{Fourier series} and \eqref{capU}.
	Then, by the trace theory
	\[
		\|f^{\delta} - f^*\|_{L^2(\Omega)} \leq C\left(\delta + \sqrt{\epsilon}\|U^*\|_{H^3(\Omega)^N}\right).
	\]
\end{corollary}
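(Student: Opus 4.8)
The plan is to transfer the $H^1$-bound on the Fourier-coefficient vectors, already supplied by the preceding Theorem, down to an $L^2$-bound on the reconstructed initial conditions, exploiting the fact that the reconstruction \eqref{Fourier series}, evaluated at $t = 0$, is a \emph{fixed, bounded} linear map on the coefficients.

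First I would write out the two initial conditions explicitly. Combining \eqref{capU} and \eqref{Fourier series} and setting $t = 0$ gives
\[
	f^\delta(\xx) - f^*(\xx) = u^\delta(\xx, 0) - u^*(\xx, 0) = \sum_{n = 1}^N \big(u_n^\delta(\xx) - u_n^*(\xx)\big)\Psi_n(0),
\]
so that the difference of the initial conditions is, pointwise in $\xx$, the same fixed linear combination of the components of $U^\delta(\xx) - U^*(\xx)$. Since $N$ is fixed and each $\Psi_n$ is the explicit smooth function \eqref{basis}, the scalar $C_0 = \big(\sum_{n = 1}^N |\Psi_n(0)|^2\big)^{1/2}$ is a finite constant.

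Next, the Cauchy--Schwarz inequality applied to this finite sum yields the pointwise estimate $|f^\delta(\xx) - f^*(\xx)| \leq C_0 |U^\delta(\xx) - U^*(\xx)|$, where $|\cdot|$ denotes the Euclidean norm of the $N$-vector. Squaring and integrating over $\Omega$ gives
\[
	\|f^\delta - f^*\|_{L^2(\Omega)}^2 \leq C_0^2 \int_{\Omega} |U^\delta(\xx) - U^*(\xx)|^2 \, d\xx = C_0^2 \|U^\delta - U^*\|_{L^2(\Omega)^N}^2 \leq C_0^2 \|U^\delta - U^*\|_{H^1(\Omega)^N}^2.
\]
I would then feed in the conclusion \eqref{convergence QRM} of the Theorem to obtain $\|f^\delta - f^*\|_{L^2(\Omega)}^2 \leq C(\delta^2 + \epsilon\|U^*\|_{H^3(\Omega)^N}^2)$, and finish by taking square roots and applying the elementary inequality $\sqrt{a + b} \leq \sqrt{a} + \sqrt{b}$ for $a, b \geq 0$, which produces the claimed bound $\|f^\delta - f^*\|_{L^2(\Omega)} \leq C(\delta + \sqrt{\epsilon}\, \|U^*\|_{H^3(\Omega)^N})$.

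There is no genuine obstacle here: everything rests on the fact that, $N$ being fixed, evaluation of the truncated series at $t = 0$ is a bounded operator --- this is what the phrase ``trace theory'' in the statement refers to --- so the only quantity to track is that the constant $C_0$, and hence the final $C$, depends only on the basis $\{\Psi_n\}_{n = 1}^N$ together with $\Omega$, $\|A\|_{C^1(\overline \Omega)}$ and $\mu$, but never on $\delta$ or $\epsilon$. If one wished to avoid the explicit constant $C_0$, one could instead argue abstractly: the reconstruction map $U \mapsto \sum_{n = 1}^N u_n \Psi_n$ is bounded from $L^2(\Omega)^N$ into $L^2(\Omega; C[0, T])$, and composing it with the continuous evaluation at $t = 0$ again yields a bounded map into $L^2(\Omega)$.
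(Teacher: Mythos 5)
Your proof is correct and is essentially the argument the paper intends: the paper offers no written proof beyond the phrase ``by the trace theory,'' which refers exactly to the boundedness of the evaluation $U \mapsto \sum_{n = 1}^N u_n \Psi_n(0)$ that you make explicit via Cauchy--Schwarz with $C_0 = \big(\sum_{n = 1}^N |\Psi_n(0)|^2\big)^{1/2}$, followed by the Theorem's estimate \eqref{convergence QRM} and the inequality $\sqrt{a + b} \leq \sqrt{a} + \sqrt{b}$. Your closing remark is also the one point the paper leaves implicit: the corollary's constant, unlike the Theorem's, additionally depends on the basis values $\Psi_n(0)$, hence on $N$ and $T$.
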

\section{Numerical illustrations} \label{sec num}

We numerically test our method when $d = 2$.
The domain $\Omega$ is the square $(-R, R)^2$. In this section, we write $\xx = (x, y)$. 
For the coefficients of the governing equation, we choose, for simplicity,
$A(\xx) = \Id$ and $\bb(\xx) = 0.$ 
The function $c$ is set as
\[
	c(x, y) =  (3(1-x)^2  e^{-x^2 - (y+1)^2}
   - 10(x/5 - x^3 - y^5)e^{-x^2-y^2}
   - 1/3e^{-(x+1)^2 - y^2})/10
\]
which is a scale of the ``peaks" function in Matlab. The graph of $c$ is displayed in Figure \ref{fig c}.
\begin{figure}[h!]
	\begin{center}
		\includegraphics[width = .3\textwidth]{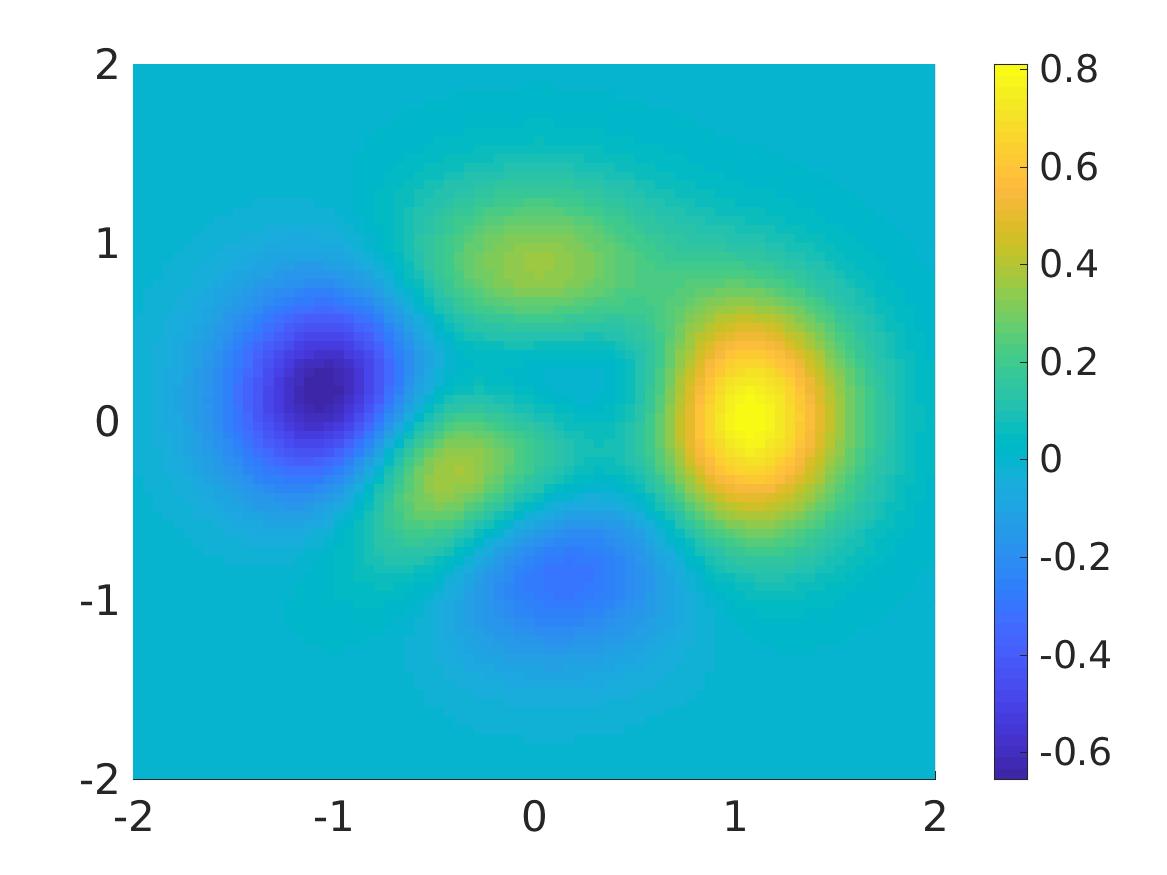}
	\end{center}
	\caption{\label{fig c} The true function $c(\xx)$ used for all numerical examples in this section.}
\end{figure}

Define a grid of points in $\Omega$
\[
	\mathcal{G} = 
	\{
		(x_i, y_j) = (-R + (i - 1)d_\xx, -R + (j - 1)d\xx): 1\leq i, j \leq N_\xx + 1
	\}
\]
where $N_\xx = 80$ and $d_\xx = 2R/N_\xx.$
For the time variable, we choose $T = 4$. 
Define a uniform partition of $[0, T]$ as
\[
	0 = t_1 < t_2 < \dots < t_{N_T + 1} = T
\] with step size $d_t = T/N_T$. In our tests, $N_T = 250.$
The forward problem is solved by finite difference method in the implicit scheme.
Denote by $u^*$ the solution of the forward problem. The data is given by
\[
	F(\xx, t) = u(\xx, t)(1 + \delta(2\rand - 1)) \quad 
	G(\xx, t) = \partial_{\nu} u(\xx, t)(1 + \delta(2\rand - 1))
\]
for $(\xx, t) \in \partial \Omega \times [0, T]$ where $\rand$ is the uniformly distributed random function taking value in $[0, 1]$ and $\delta$ is the noise level. The noise level $\delta$ is given in each numerical tests.

\subsection{The implementation for Algorithm \ref{alg1}}
\label{sec impl}

The main part of this section is to compute the minimizer $U$ of $J_{\epsilon}$ subject to the constraints \eqref{2.11} and \eqref{2.12}.
The ``cut-off" number $N$ is set to be 30, see Remark \ref{rem N} for this choice of $N$.
To construct the orthonormal basis $\{\Psi_n\}_{n = 1}^N,$ for each $n \in \{1, \dots, N\}$, we identify the function $\Phi_n$, defined in \eqref{phi}, by the $N_T + 1$ dimensional vector $(\Phi_n(t_1), \dots, \Phi_{n}(t_{N_T + 1}))$. 
Then apply the  Gram-Schmidt orthogonalization process for the set $\{(\Phi_n(t_1), \dots, \Phi_{n}(t_{N_T + 1}))\}_{n = 1}^N$ in the $N_T + 1$ dimensional Euclidian space. In other words, we construct $\{\Psi_n\}_{n = 1}^N$ in the finite difference scheme.
The discretized version of $U(\xx) = (u_1(\xx), \dots, u_N(\xx))^T$, $\xx \in \Omega$ is $(u_1(x_i, y_j), \dots, u_N(x_i, y_j))_{i, j = 1}^{N_\xx + 1}.$
Hence, $J_{\epsilon}(U)$, see \eqref{def J}, is approximated by
\begin{multline}
	J_{\epsilon}(U) =
	\\
	 d_\xx^2 \sum_{i, i = 2}^{N_\xx} 
	\sum_{m = 1}^N\Big|\frac{u_m(x_{i + 1}, y_j) + u_m(x_{i - 1}, y_j) + u_m(x_i, y_{j + 1}) + u_m(x_i, y_{j - 1}) - 4 u_m(x_i, y_j)}{d_\xx^2}
	\\
	+ c(x_i, y_j) u_m(x_i, y_j) - \sum_{n = 1}^N s_{mn} u_n(x_i, y_j)
\Big|^2
	+ \epsilon d_\xx^2\sum_{i, j = 2}^{N_x} \sum_{m = 1}^N|u_m(x_i, y_j)|^2
	\\
	+ \epsilon^2 d_\xx^2 \sum_{i, j = 2}^{N_\xx} \sum_{m = 1}^N\Big|
		\frac{u_m(x_{i+1}, y_j) - u_{m}(x_{i}, y_j) }{d_\xx}
	\Big|^2
	\\
	+ \epsilon^2 d_\xx^2 \sum_{i, j = 2}^{N_\xx} \sum_{m = 1}^N\Big|
		\frac{u_m(x_{i}, y_{j + 1}) - u_{m}(x_{i}, y_j) }{d_\xx}
	\Big|^2.
	\label{5.1}
\end{multline}
Here, we slightly change the $H^3$ norm of the regularity term to the $H^1$ norm. 
This makes the computational codes less heavy. 
The numerical results with this change are still acceptable.
We also modify the regularized parameter of the term $\|\nabla U\|_{L^2(\Omega)^N}$ to be $\epsilon^2$, instead of $\epsilon$, since we observe that the obtained numerical results are more accurate with this modification. 
To numerically prove this, we solve the inverse problem when the function $f_{\rm true}$ is given in Test 1 in Section \ref{sec test} in two cases: with and without this modification and then compare the corresponding outputs. 
The results are displayed in  Figure \ref{compare}. 
It is clear from Figure \ref{compare} that the modification above provides better numerical results.
\begin{figure}[h!]
	\begin{center}
		\subfloat[The function $f_{\rm true}$]{
			\includegraphics[width = .3\textwidth]{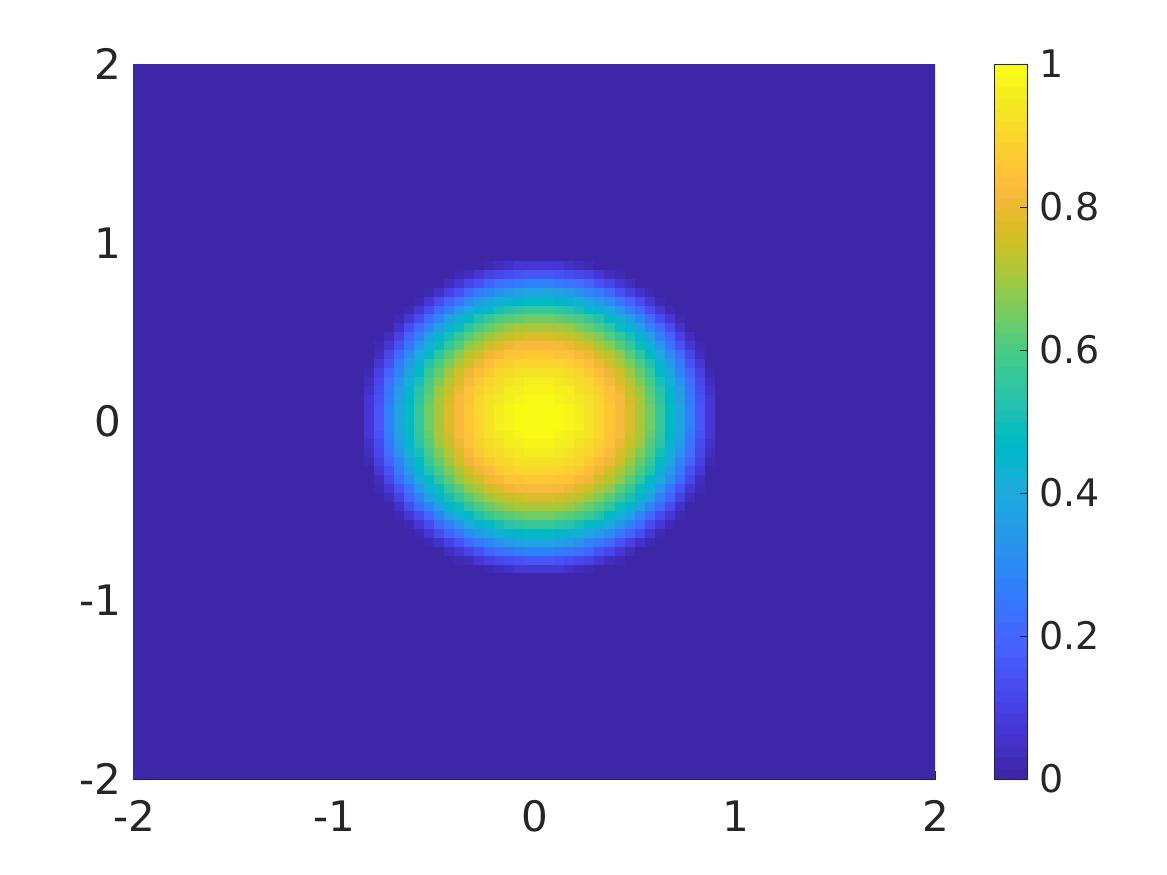} 
		} \hfill
		\subfloat[$f_{\rm comp}$ computed using the regularization term $\epsilon^2 \|\nabla U\|_{L^2(\Omega)^N}^2$ when $\delta = 50\%$]{
			\includegraphics[width = .3\textwidth]{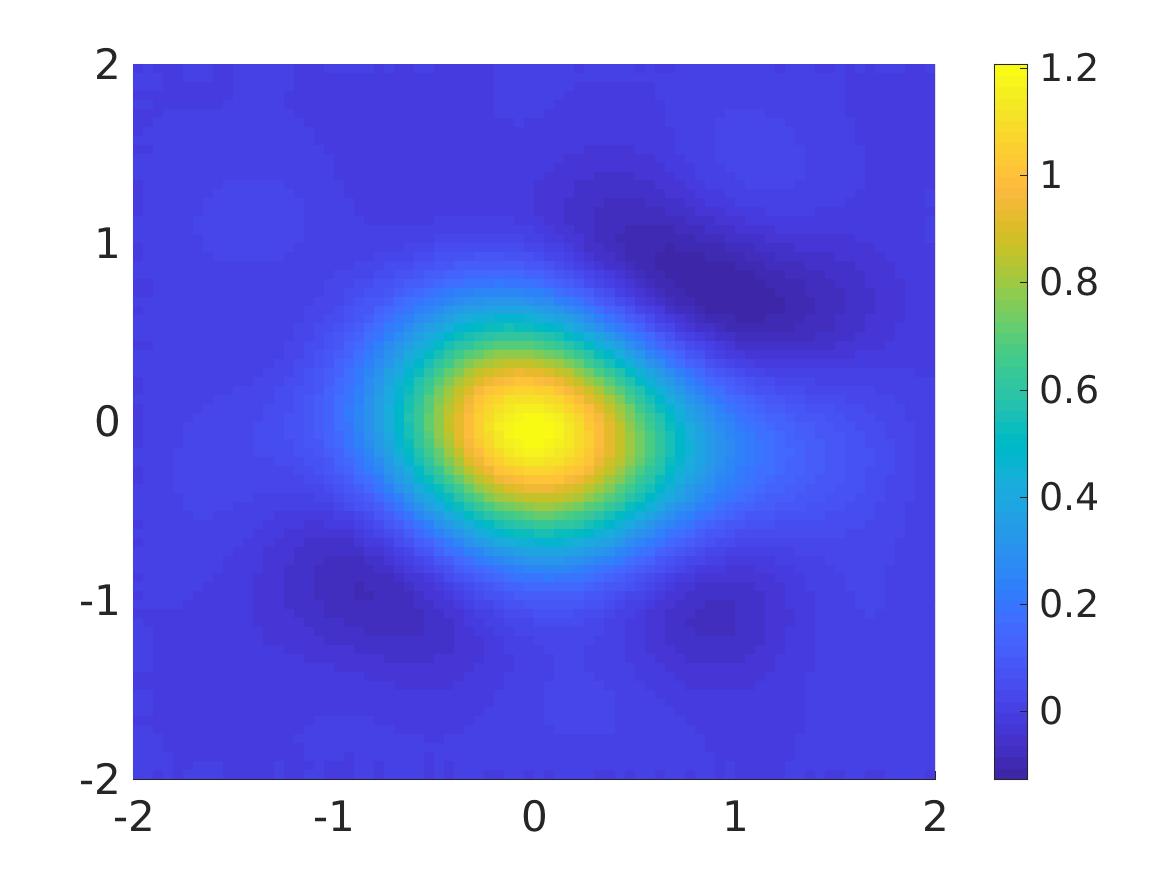} 
		}\hfill
		\subfloat[$f_{\rm comp}$ computed using the regularization term $\epsilon \|\nabla U\|_{L^2(\Omega)^N}^2$ when $\delta = 50\%$]{
			\includegraphics[width = .3\textwidth]{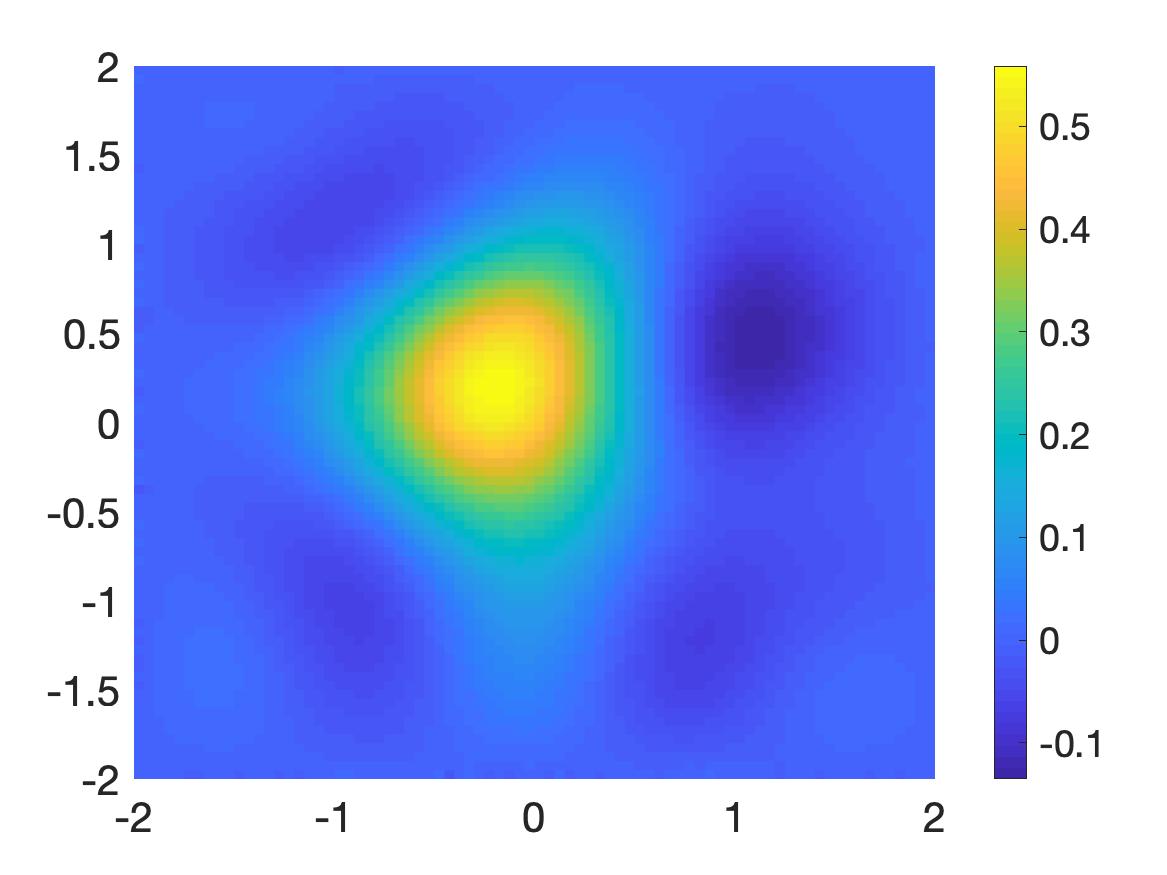} 
		}
		\caption{\label{compare} Test 1. The comparison of the reconstruction of the function $f$ with and without the modification for the regularized parameter.
		It is evident that the numerical result in (b) is significantly better than that in (c) in both reconstructed shape and computed value.}
	\end{center}
\end{figure}

The expression in \eqref{5.1} is simplified as follows
\begin{multline}
	J_{\epsilon}(U) = \\
	d_\xx^2 \sum_{i, j = 2}^{N_x} \sum_{m = 1}^N 
	\Big|
		\sum_{n = 1}^N \left[\delta_{mn}\left(\frac{-4}{d_\xx^2} + c(x_i, y_j)\right) -s_{mn} \right]u_n(x_i, y_j)
		+
		\frac{\delta_{mn}}{d_\xx^2}u_n(x_{i+1},y_j)
		\\
		+
		\frac{\delta_{mn}}{d_\xx^2}u_n(x_{i-1},y_j)
		+
		\frac{\delta_{mn}}{d_\xx^2}u_n(x_{i},y_{j+1})
		+
		\frac{\delta_{mn}}{d_\xx^2}u_n(x_{i},y_{j-1})
	\Big|^2
	\\
	+ \epsilon d_\xx^2\sum_{i, j = 2}^{N_x} \sum_{m = 1}^N|u_m(x_i, y_j)|^2
	+ \epsilon^2 d_\xx^2 \sum_{i, j = 2}^{N_\xx} \sum_{m = 1}^N\Big|
		\frac{u_m(x_{i+1}, y_j) - u_{m}(x_{i}, y_j) }{d_\xx}
	\Big|^2
	\\
	+ \epsilon^2 d_\xx^2 \sum_{i, j = 2}^{N_\xx} \sum_{m = 1}^N\Big|
		\frac{u_m(x_{i}, y_{j + 1}) - u_{m}(x_{i}, y_j) }{d_\xx}
	\Big|^2.
	\label{5.2}
\end{multline}
Here, we use the Kronecker number $\delta_{mn}$ for the convience of writing the computational codes.
We next identify $\{u_n(x_i, y_j): 1 \leq i, j \leq N_\xx + 1, 1 \leq n \leq N\}$ with the $(N_\xx + 1)^2N$ dimensional vector 
$
	\mathfrak{U} = (\mathfrak{u}_{\mathfrak{i}})_{\mathfrak{i} = 1}^{(N_\xx + 1)^2N}
$
according to the rule
$
	\mathfrak{u}_{\mathfrak{i}} = u_n(x_i, y_j)
$
where the index $\mathfrak{i}$ is 
\[
	\mathfrak{i} = (i - 1)(N_\xx + 1)N + (j - 1)N + n, \quad 1 \leq i, j \leq N_\xx + 1, 1 \leq n \leq N.
\]
Then, with this notation, $J_{\epsilon}(U)$ in \eqref{5.2} is rewritten as
\[
	\mathfrak{J}_{\epsilon}(\mathfrak{U}) = d_\xx^2 |\mathfrak{L} \mathfrak{U}|^2 + 
	+ \epsilon d_\xx^2 |\mathfrak{U}|^2
	+ \epsilon d_\xx^2 |D_x \mathfrak{U}|^2 
	+\epsilon d_\xx^2 |D_y \mathfrak{U}|^2.
\]
The $(N_\xx + 1)^2N \times (N_\xx + 1)^2N$ matrices $\mathfrak{L}$, $D_x$ and $D_y$ are as follows.
\begin{enumerate}
	\item {\it Define the matrix $\mathfrak{L}.$} For $\mathfrak{i} = (i - 1)(N_\xx + 1)N + (j - 1)N + m$, for some $2 \leq i, j \leq N_x$, the $\mathfrak{i} \mathfrak{j}^{\rm th}$ entry of $\mathfrak{L}$ is
	\begin{enumerate}
		\item  $\delta_{mn}\left(-4/d_\xx^2 + c(x_i, y_j)\right) -s_{mn} $ if $\mathfrak{j} = (i - 1)(N_\xx + 1)N + (j - 1)N + n,$
		\item $\delta_{mn}/d_x^2$ if  $\mathfrak{j} = (i \pm 1 - 1)(N_\xx + 1)N + (j - 1)N + n$ or $\mathfrak{j} = (i  - 1)(N_\xx + 1)N + (j \pm 1 - 1)N + n$,
		\item $0$ otherwise.
	\end{enumerate}
	\item {\it Define the matrix $D_x$.} For $\mathfrak{i} = (i - 1)(N_\xx + 1)N + (j - 1)N + m$, for some $2 \leq i, j \leq N_x$, the $\mathfrak{i} \mathfrak{j}^{\rm th}$ entry of $D_x$ is
	\begin{enumerate}
		\item $1/d_\xx$ if $\mathfrak{j} = (i + 1 - 1)(N_\xx + 1)N + (j - 1)N + m$,
		\item $-1/d_\xx$ if $\mathfrak{j} = \mathfrak{i}$,
		\item $0$ otherwise.
	\end{enumerate}
	\item {\it Define the matrix $D_x$.} For $\mathfrak{i} = (i - 1)(N_\xx + 1)N + (j - 1)N + m$, for some $2 \leq i, j \leq N_x$, the $\mathfrak{i} \mathfrak{j}^{\rm th}$ entry of $D_x$ is
	\begin{enumerate}
		\item $1/d_\xx$ if $\mathfrak{j} = (i  - 1)(N_\xx + 1)N + (j + 1 - 1)N + m$,
		\item $-1/d_\xx$ if $\mathfrak{j} = \mathfrak{i}$,
		\item $0$ otherwise.
	\end{enumerate}
\end{enumerate}

\begin{remark}[The values of the parameters]
	As mentioned, we take $N = 30$, $N_\xx = 80$, $N_T = 250$, $R = 2$. 
	The regularized parameter $\epsilon = 10^{-7}$.
	These values of parameters are used for all tests in Section \ref{sec test}.
\end{remark}

\subsection{Tests} \label{sec test}
We perform four (4) numerical examples in this paper. 
These examples with high levels of noise show the strength of our method. 
We will also compare the reconstructed maximum values of the reconstructed functions and the true ones.
Below, $f_{\rm true}$ and $f_{\rm comp}$ are, respectively, the true source function and the reconstructed one due to Algorithm \ref{alg1} with the parameters in Section \ref{sec impl}.

\begin{enumerate}
	\item {\it Test 1. The case of one inclusion.} The function $f_{\rm true}$ is a smooth function supported in a disk with radius 1 centered at the origin. 
	More precisely, 
	\[
		f_{\rm true}(\xx) = \left\{
			\begin{array}{ll}
				\ds e^{-\frac{1}{1 - |\xx|^2} + 1} &\mbox{if } |\xx| < 1,\\
				0 &\mbox{otherwise}.
			\end{array}
		\right.
	\]
	Figure \ref{test 1} displays the functions $f_{\rm true}$ and $f_{\rm comp}$. 
	Table \ref{table 1} show the reconstructed value of the function $f_{\rm comp}$ and the relative error.
	The noise levels are $\delta = 0\%$, $25\%$, $50\%$, $75\%$ and $100\%.$
	\begin{figure}[h!]
	\begin{center}
		\subfloat[The function $f_{\rm true}$]{
			\includegraphics[width = .3\textwidth]{ftrue_1} 
		} \hfill
		\subfloat[$f_{\rm comp}$, $\delta = 0\%$]{
			\includegraphics[width = .3\textwidth]{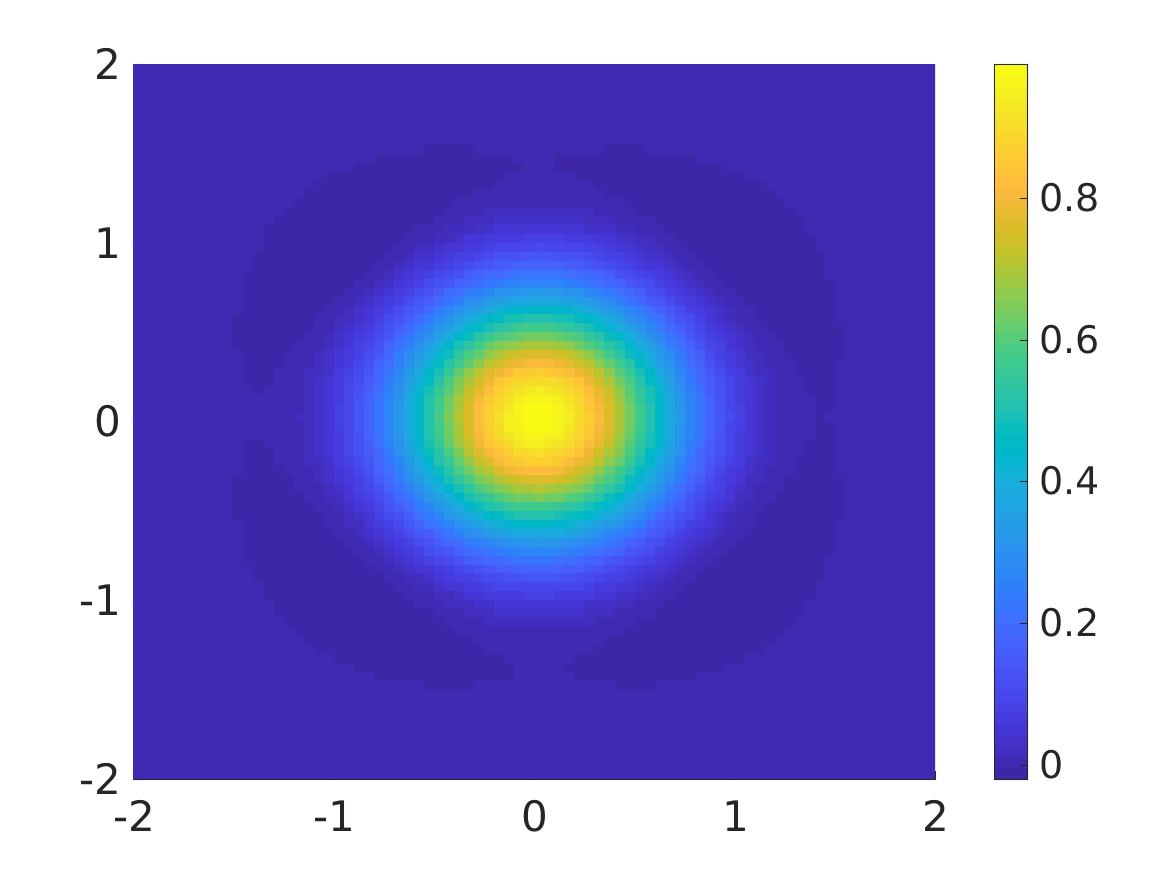} 
		}\hfill
		\subfloat[$f_{\rm comp}$, $\delta = 25\%$]{
			\includegraphics[width = .3\textwidth]{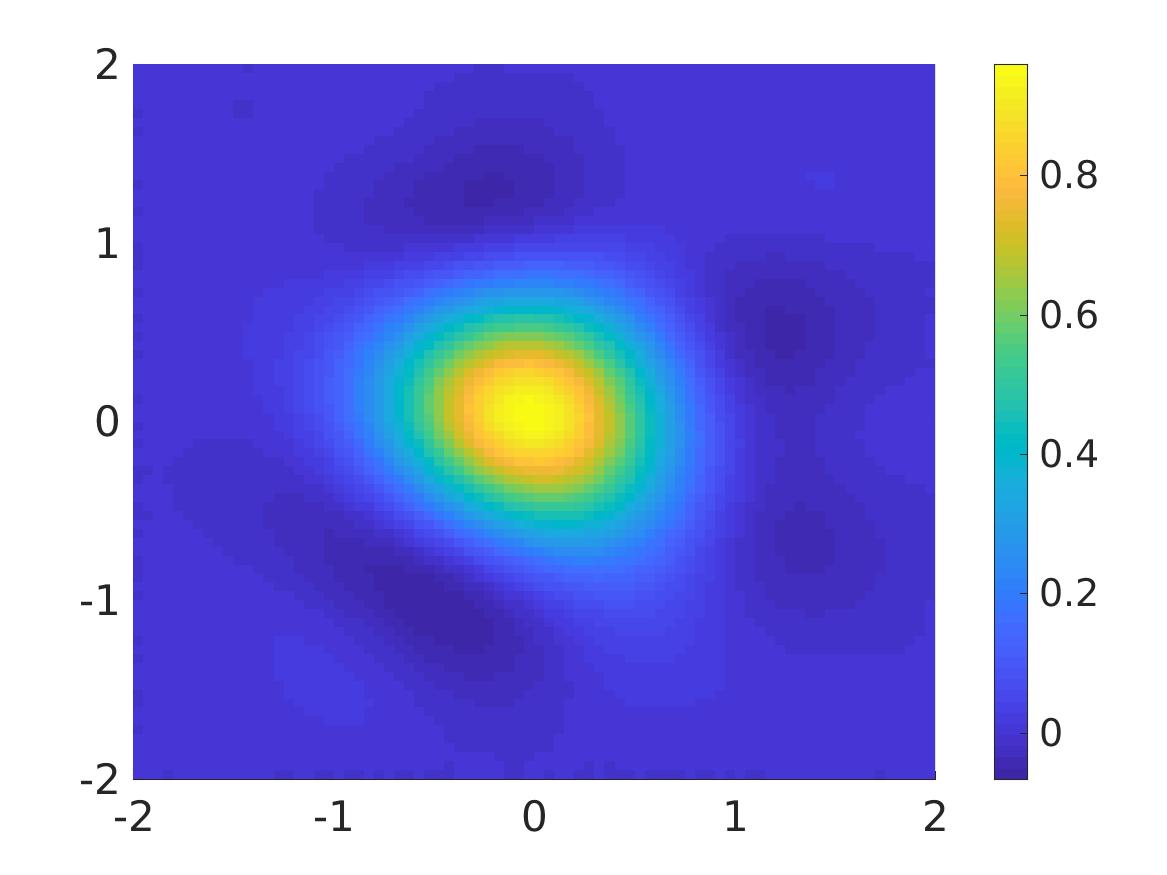} 
		}
		
		\subfloat[$f_{\rm comp}$, $\delta = 50\%$]{
			\includegraphics[width = .3\textwidth]{f_comp_1_50} 
		}\hfill
		\subfloat[$f_{\rm comp}$, $\delta = 75\%$]{
			\includegraphics[width = .3\textwidth]{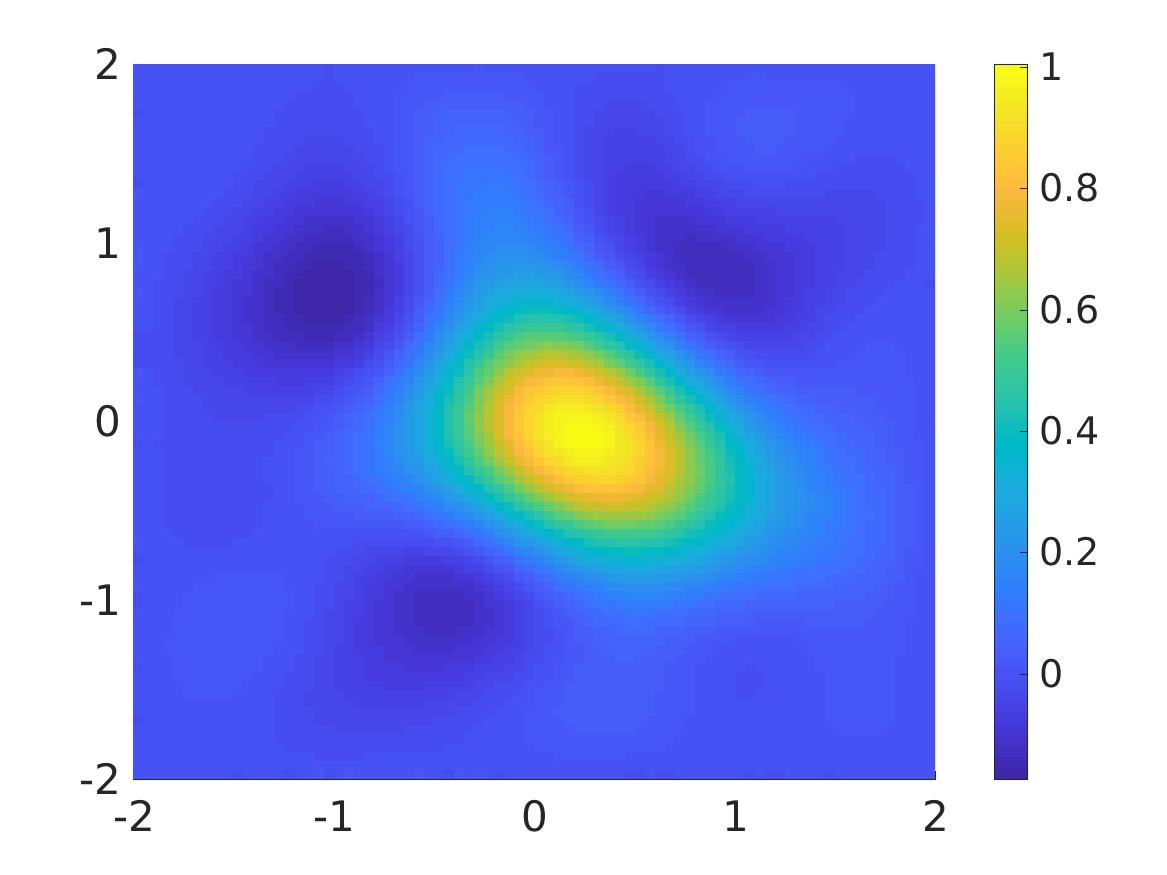} 
		}\hfill
		\subfloat[$f_{\rm comp}$, $\delta = 100\%$]{
			\includegraphics[width = .3\textwidth]{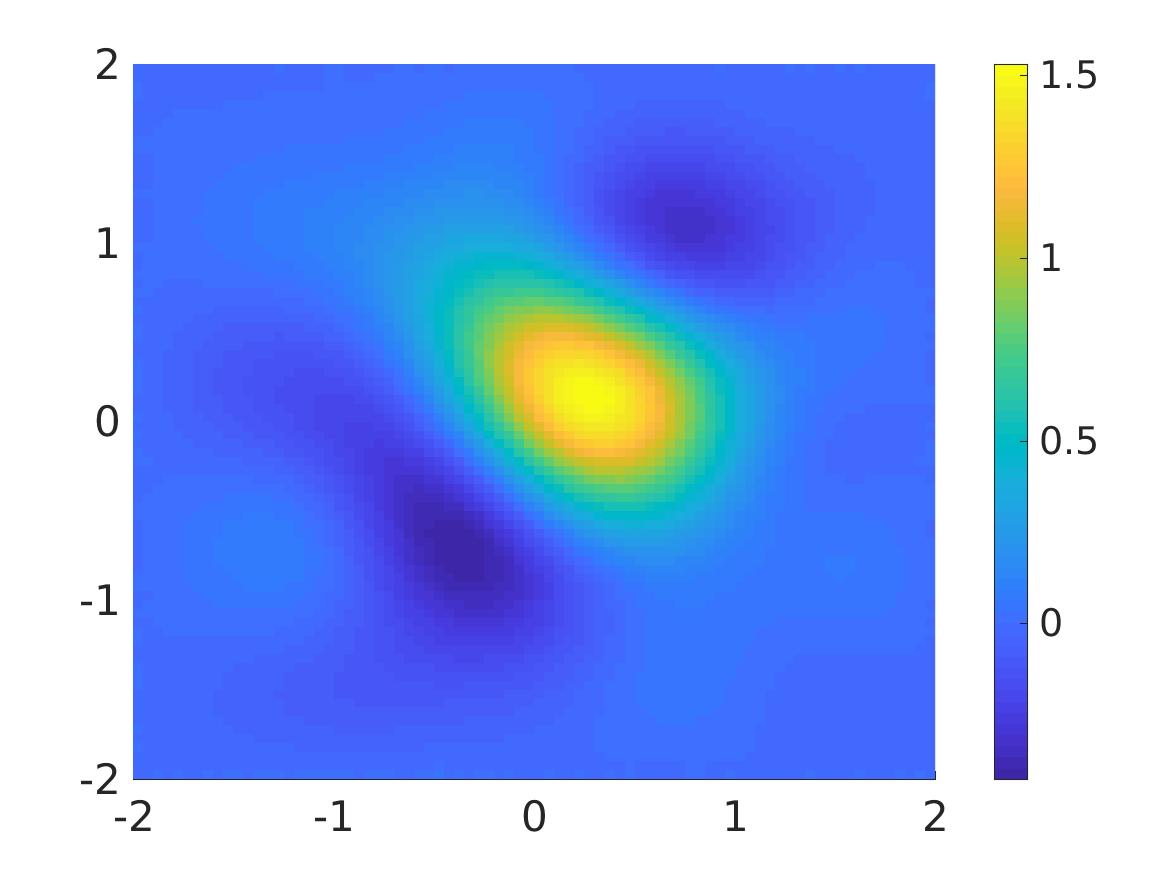} 
		}
	\caption{\label{test 1} Test 1. The true and computed source functions.  Our method still  works well when $\delta = 100\%.$ It is shown in (e) that the reconstructed value of $f_{\rm comp}$ with $\delta = 75\%$ is quite accurate, even better than in (d), but in contrast, the reconstructed shape starts to break down.}
	\end{center}
	\end{figure}
	
\begin{table}[h!]
{\small
\caption{\label{table 1} Test 1. Correct and computed maximal values of source functions. ${\rm error}_{\rm rel}$ denotes the relative error of the reconstructed maximal value. ${\rm pos}_{\rm true}$ is the true position of the inclusion; i.e., the maximizer of $f_{\rm true}$. ${\rm pos}_{\rm comp}$ is the computed position of the inclusion. ${\rm dis}_{\rm err}$ is the absolute error of the reconstructed positions.}
\begin{center}
\begin{tabular}{|c|c|c|c|c|c|c|}
\hline
 noise level &  $\max f_{\rm true}$ & $\max f_{\rm comp}$ &${\rm error}_{\rm rel}$&${\rm pos}_{\rm true}$&${\rm pos}_{\rm comp}$&${\rm dis_{\rm err}}$\\ 
\hline
0\%& 1 &0.99 &1.0\%&$(0,0)$ &$(0,0)$&0\\
\hline
25\%& 1 &0.96 &4.0\%&$(0,0)$& $(-0.05, 0)$ &0.05 \\ 
\hline
50\% &1&1.21 &21.0\%&$(0,0)$& $(-0.05, 0.1)$&0.11
\\
\hline
75\% &1&1.01 &1.0\%&$(0,0)$&$( 0.2, -0.1)$&0.22
\\
\hline
100\% &1&1.53 &53.0\%&$(0,0)$&$( 0.25, 0.1)$&0.27
\\
\hline
\end{tabular}%
\end{center}
}
\end{table}
 It is evident that our method is robust for Test 1 in the sense that the reconstructed maximal value of the function $f$ and the reconstructed shape and position of the inclusion are quite accurate.
 \item {\it Test 2. The case of two inclusions.}   The function $f_{\rm true}$ is a smooth function supported in two disks with radius $r = 0.8$ centered at $\xx_1 = (-1, 0)$ and $\xx_2 = (1, 0)$ respectively. The function $f_{\rm true}$ is given by the formula 
\[
	f(\xx) = \left\{
		\begin{array}{ll}
			\ds e^{-\frac{r^2}{r^2 - |\xx - \xx_1|^2} + 1} & \mbox{if } |\xx - \xx_1| < r,\\ 
			\ds e^{-\frac{r^2}{r^2 - |\xx - \xx_2|^2} + 1} & \mbox{if } |\xx - \xx_2| < r,\\
			0&\mbox{otherwise}.
		\end{array}
	\right.
\]
	Figure \ref{test 2} displays the functions $f_{\rm true}$ and $f_{\rm comp}$. 
	Table \ref{table 2} show the reconstructed value of the function $f_{\rm comp}$ and the relative error.
	The noise levels are $\delta = 0\%$, $25\%$, $50\%$, $75\%$ and $100\%.$
	\begin{figure}[h!]
	\begin{center}
		\subfloat[The function $f_{\rm true}$]{
			\includegraphics[width = .3\textwidth]{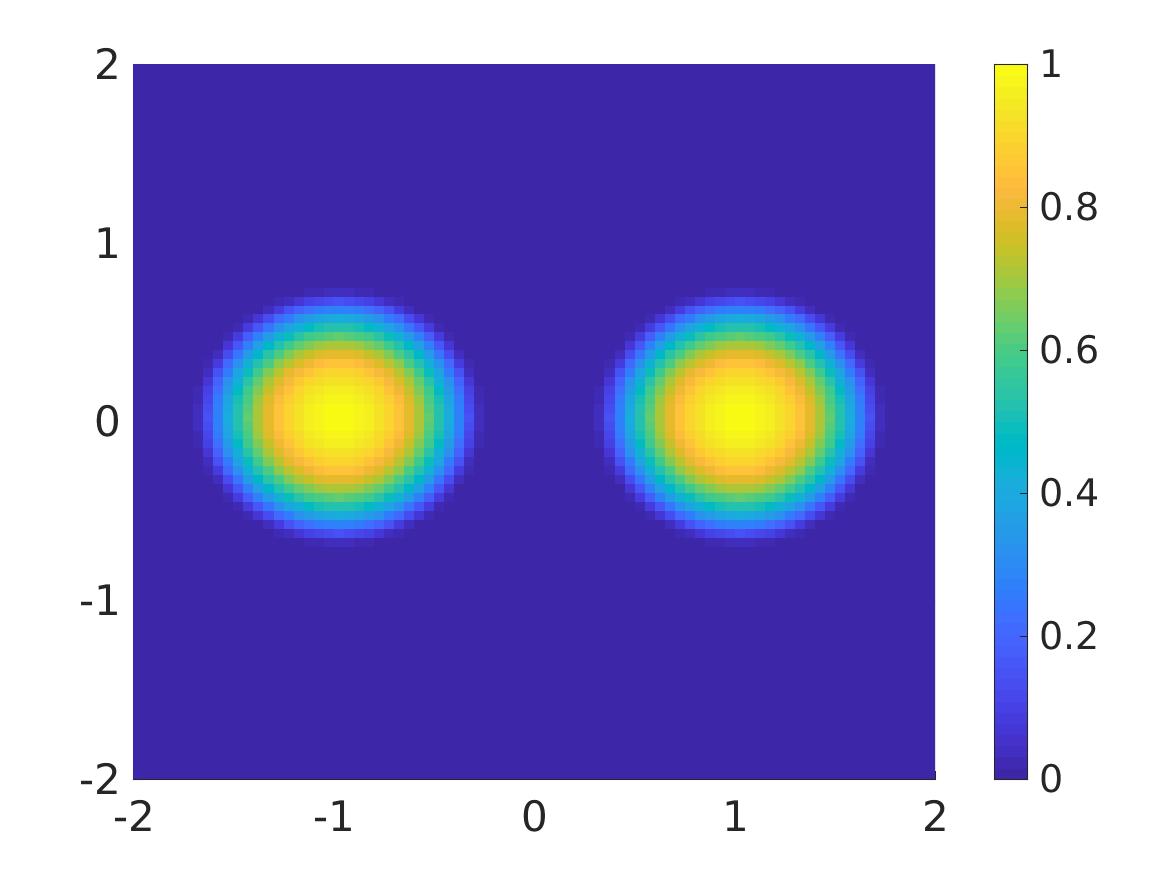} 
		} \hfill
		\subfloat[$f_{\rm comp}$, $\delta = 0\%$]{
			\includegraphics[width = .3\textwidth]{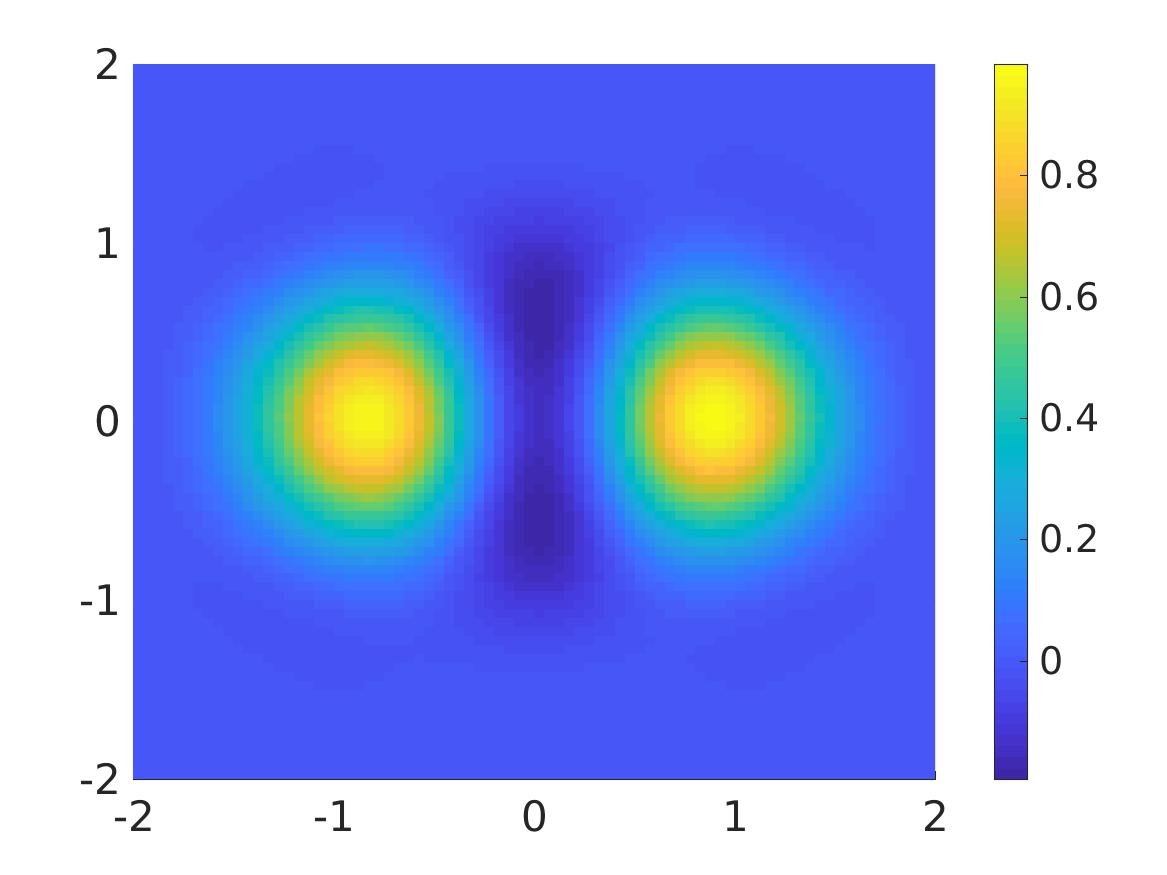} 
		}\hfill
		\subfloat[$f_{\rm comp}$, $\delta = 25\%$]{
			\includegraphics[width = .3\textwidth]{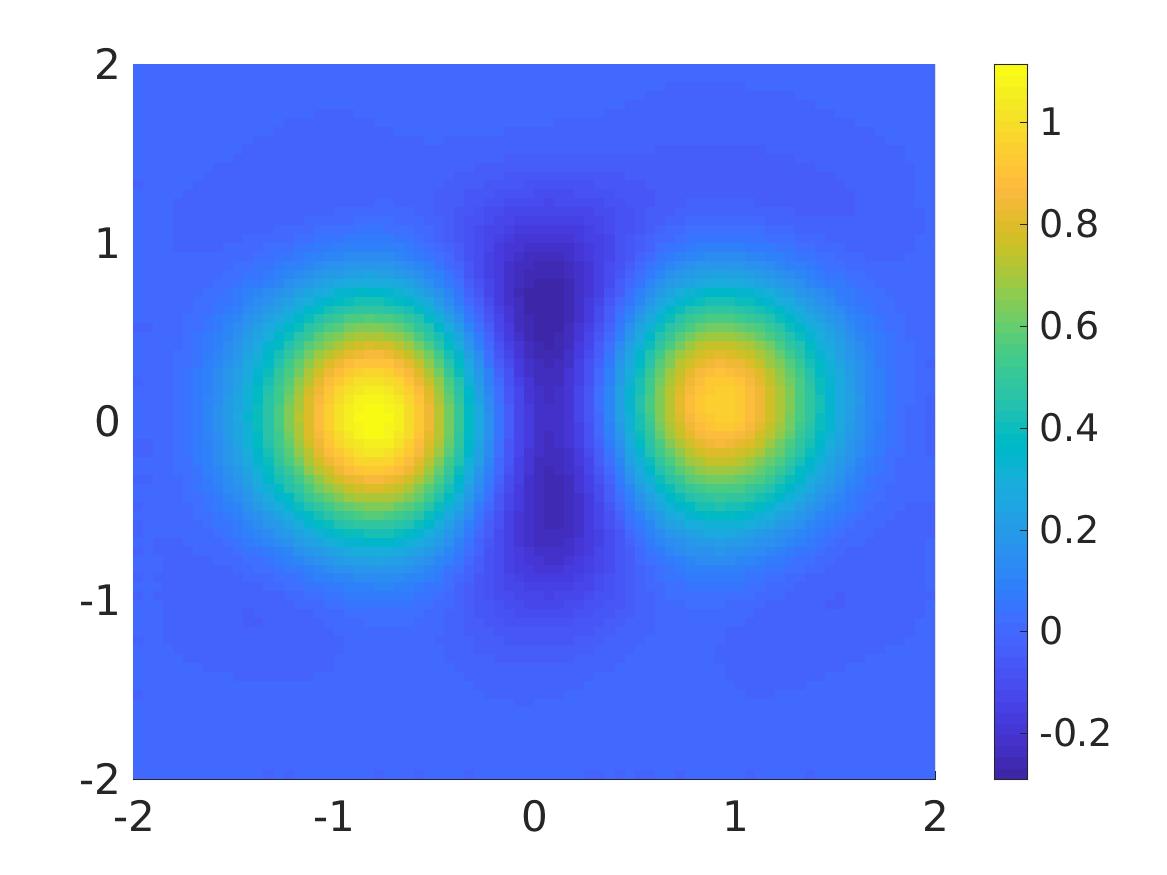} 
		}
		
		\subfloat[$f_{\rm comp}$, $\delta = 50\%$]{
			\includegraphics[width = .3\textwidth]{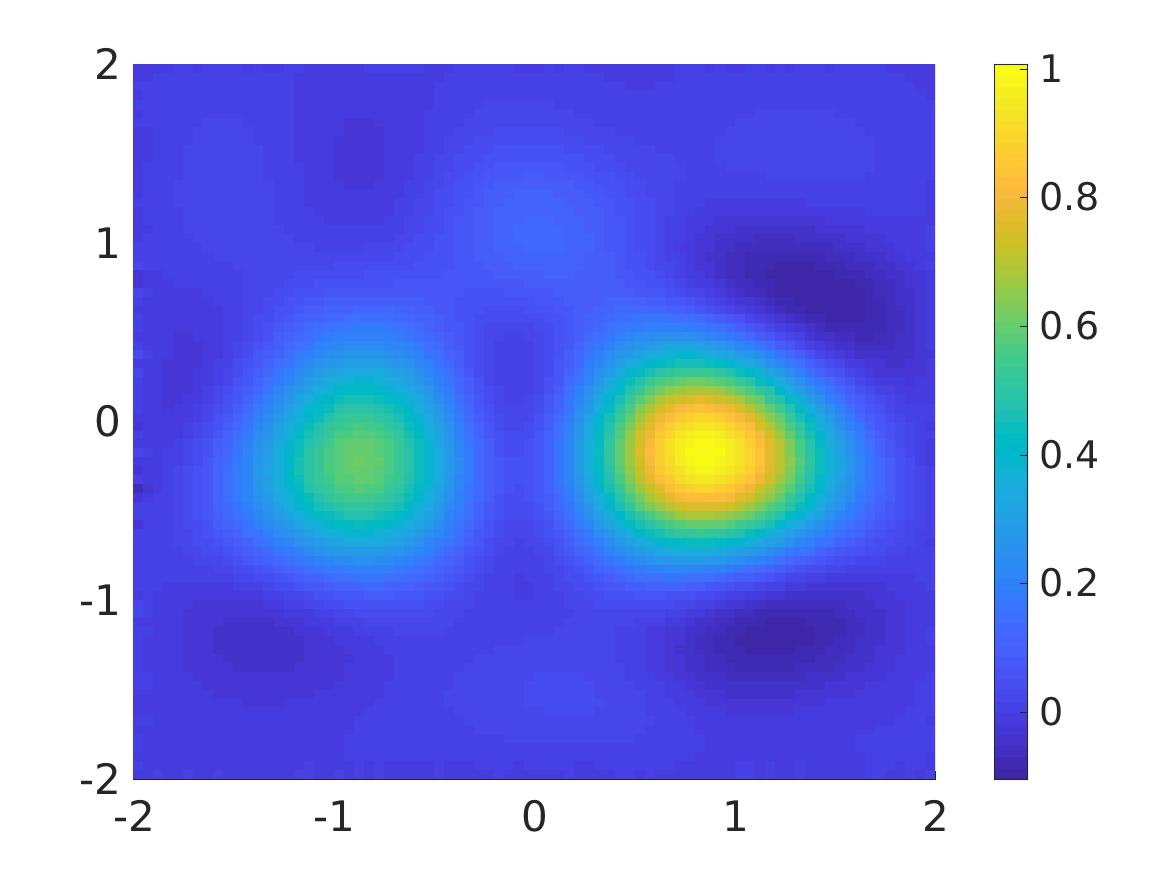} 
		}\hfill
		\subfloat[$f_{\rm comp}$, $\delta = 75\%$]{
			\includegraphics[width = .3\textwidth]{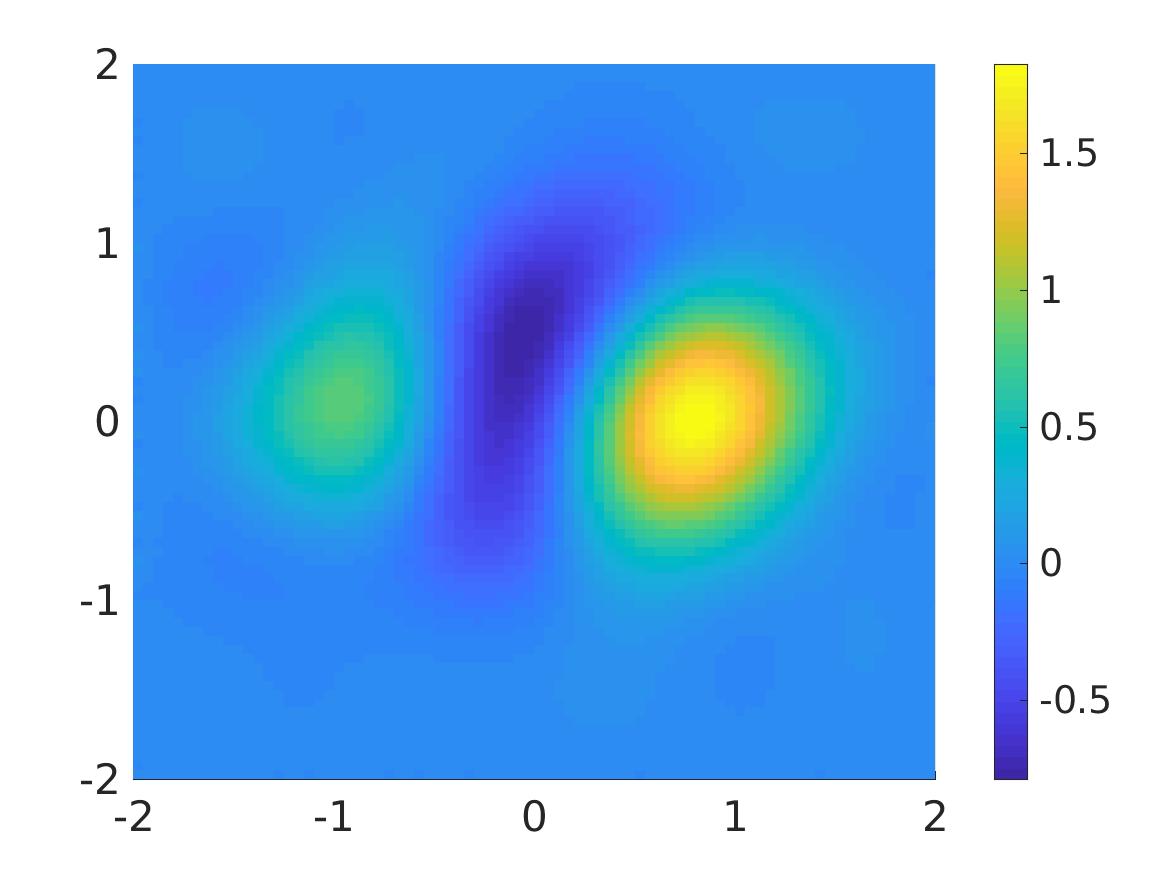} 
		}\hfill
		\subfloat[$f_{\rm comp}$, $\delta = 100\%$]{
			\includegraphics[width = .3\textwidth]{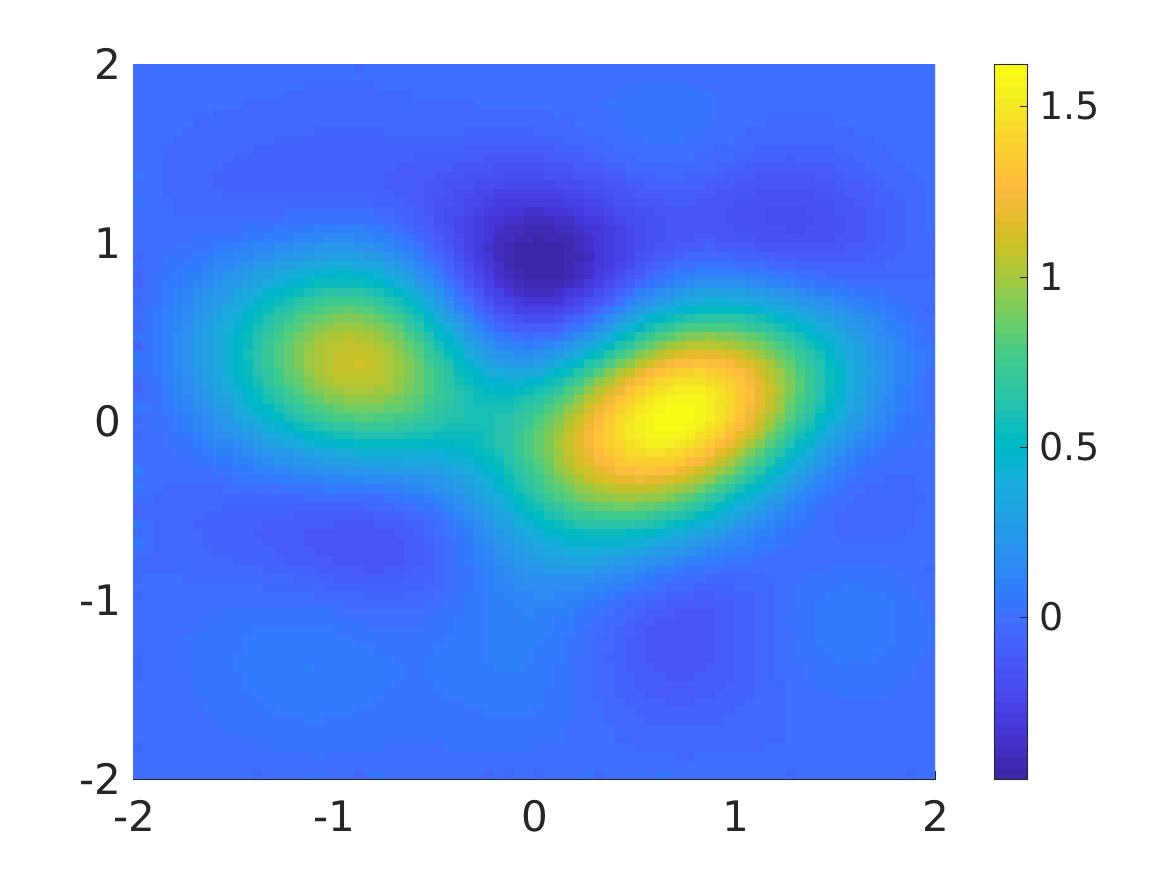} 
		}
	\caption{\label{test 2} Test 2. The true and computed source functions. The reconstruction of the two inclusions are not symmetric probably because the true function $c$, see Figure \ref{fig c} for its graph, is negative on the left and positive on the right. However, both inclusions can be seen when the noise level goes up to $100\%.$}
	\end{center}
	\end{figure}

\begin{table}[h!]
{\small
\caption{\label{table 2} Test 2. Correct and computed maximal values of the inclusions. 
$\max_{\rm inc, true}$ and $\max_{\rm inc, comp}$ are the true and computed, respectively, maximal values of the source in an inclusion.
${\rm error}_{\rm rel}$ denotes the relative error of the reconstructed maximal value. ${\rm pos}_{\rm true}$ is the true position of the inclusion; i.e., the maximizer of $f_{\rm true}$. ${\rm pos}_{\rm comp}$ is the computed position of the inclusion. ${\rm dis}_{\rm err}$ is the absolute error of the reconstructed positions.}
\begin{center}
\begin{tabular}{|c|c|c|c|c|c|c|c|}
\hline
 noise level &inclusion&  $\max_{\rm inc, true}$ &  $\max_{\rm inc, comp}$ & ${\rm error}_{\rm rel}$&${\rm pos}_{\rm true}$&${\rm pos}_{\rm comp}$&${\rm dis_{\rm err}}$
 \\ 
 \hline
 0\%&left&1&0.96&4.0\%&$(-1,0)$&$(-0.85,0)$&0.15\\
 \hline
 0\%&right&1&0.98&2.0\%&$(1,0)$&$(0.85,0)$&0.15\\
 \hline
 25\%&left&1&1.11&11\%&$(-1, 0)$&$(-0.85, 0)$&0.15\\
 \hline
  25\%&right&1&0.96&4\%&$(1, 0)$&$(0.9, 0.1)$&0.14\\
 \hline
 50\%&left&1&0.61&39\%&$(-1, 0)$&$(-0.9, 0.25)$&0.27\\
 \hline
 50\%&right&1&1.01&1\%&$(1, 0)$&$(0.85, -0.2)$&0.25\\
 \hline
 75\%&left&1&0.84&26\%&$(-1, 0)$&$(-1, 0.1)$&0.1\\
 \hline
  75\%&right&1&1.82&82\%&$(1, 0)$&$(0.8, 0)$&0.2\\
  \hline
   100\%&left&1&1.1&10\%&$(-1, 0)$&$(-0.9, 0.3)$&0.32\\
 \hline
 100\%&right&1&1.58&58\%&$(1, 0)$&$(0.05, 0.8)$&0.21\\
 \hline
\end{tabular}
\end{center}
}
\end{table}

The reconstruction in Test 2 is good. In this test, the reconstruct breaks down when the noise level is $75\%$ although we are able to detect the inclusions with higher noise levels.
\item {\it Test 3. The case of non-inclusion and nonsmooth function.}   The function $f_{\rm true}$ is the characteristic function of the letter $Y$.
	Figure \ref{test 3} displays the functions $f_{\rm true}$ and $f_{\rm comp}$. 
	The noise levels are $\delta = 10\%$ and $15\%.$
	\begin{figure}[h!]
	\begin{center}
		\subfloat[The function $f_{\rm true}$]{			\includegraphics[width = .3\textwidth]{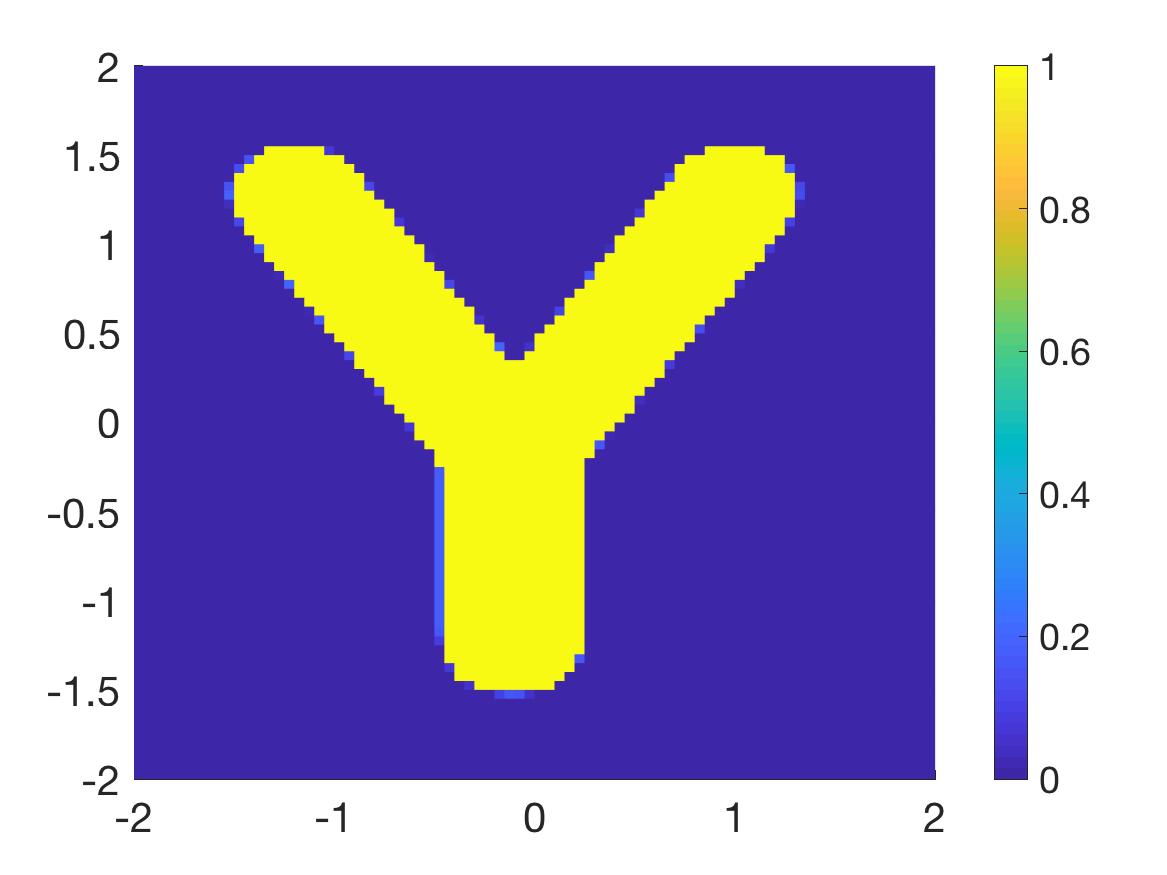} 
		} \hfill
		\subfloat[$f_{\rm comp}$, $\delta = 10\%$]{			\includegraphics[width = .3\textwidth]{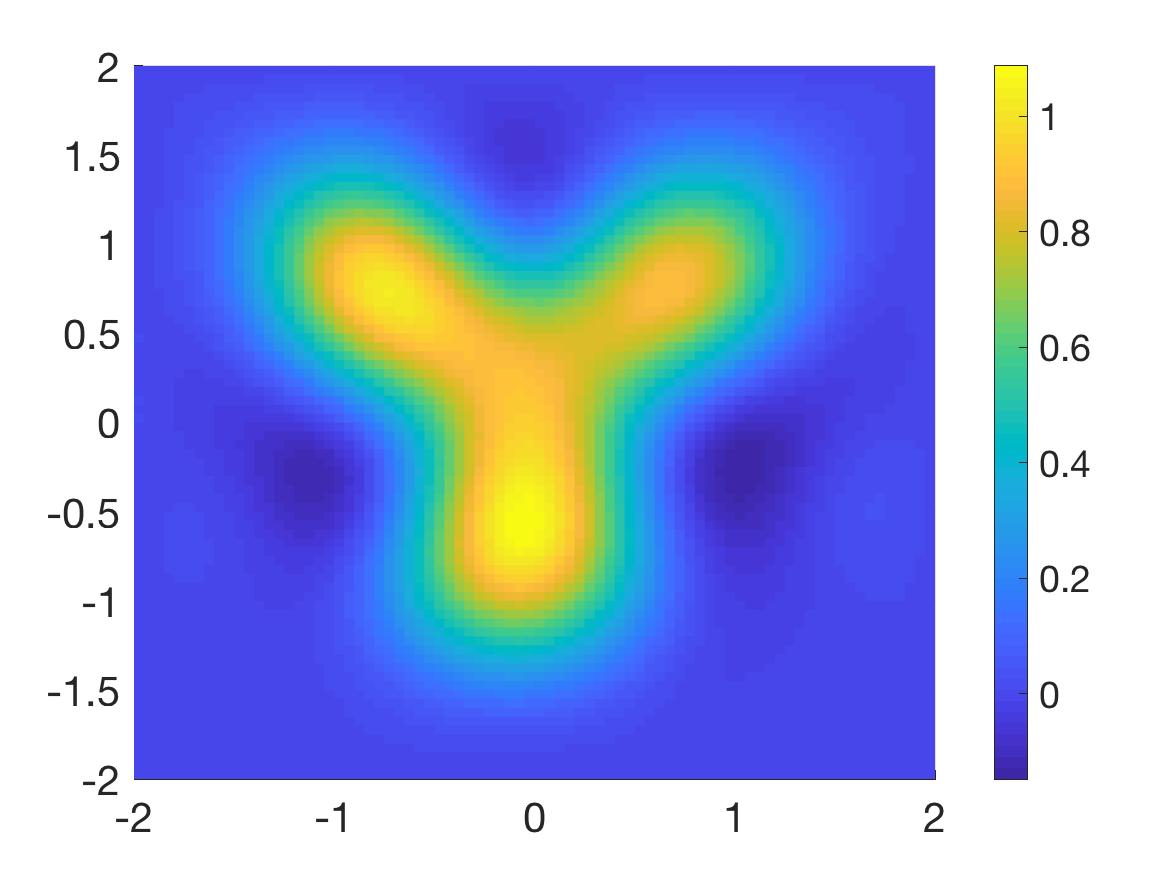} 
		}\hfill
		\subfloat[$f_{\rm comp}$, $\delta = 15\%$]{			\includegraphics[width = .3\textwidth]{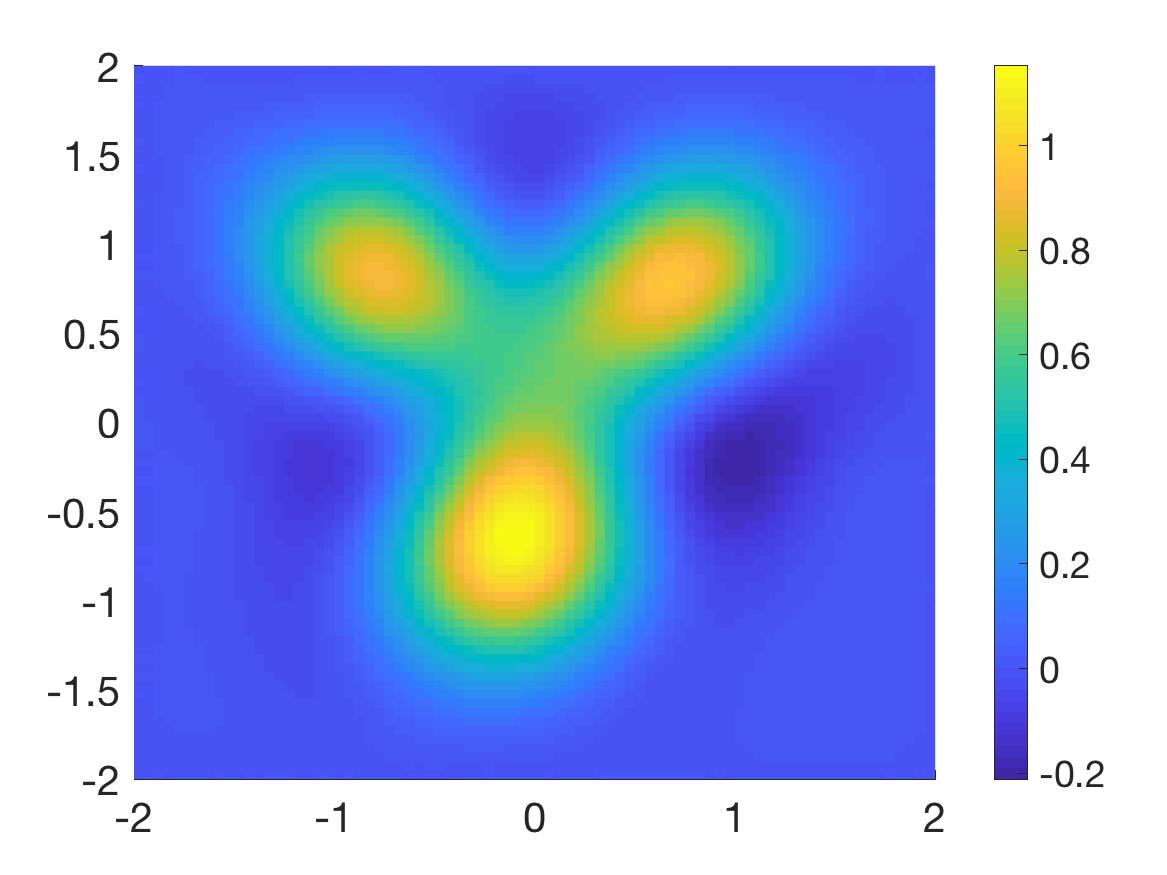} 
		}
	\caption{\label{test 3} Test 3. The true and computed source functions. The letter $Y$ can be detected well in this case. The true maximal value of $f_{\rm true}$ is 1. The computed maximal value of $f_{\rm comp}$ when $\delta = 10\%$ is 1.09 (relative error 9\%). The computed maximal value of $f_{\rm comp}$ when $\delta = 15\%$ is 1.15 (relative error 15\%).}
	\end{center}
	\end{figure}

 We can reconstruct the letter $Y$ and the reconstructed maximal of $f_{\rm comp}$ is good when $\delta = 10\%$ but the error is large when the noise level reaches $15\%.$

\item {\it Test 4. The case of non-inclusion and nonsmooth function.}   The function $f_{\rm true}$ is the characteristic function of the letter $\lambda$.
	Figure \ref{test 4} displays the functions $f_{\rm true}$ and $f_{\rm comp}$. 
	The noise levels are $\delta = 10\%$ and $15\%.$
	\begin{figure}[h!]
	\begin{center}
		\subfloat[The function $f_{\rm true}$]{			\includegraphics[width = .3\textwidth]{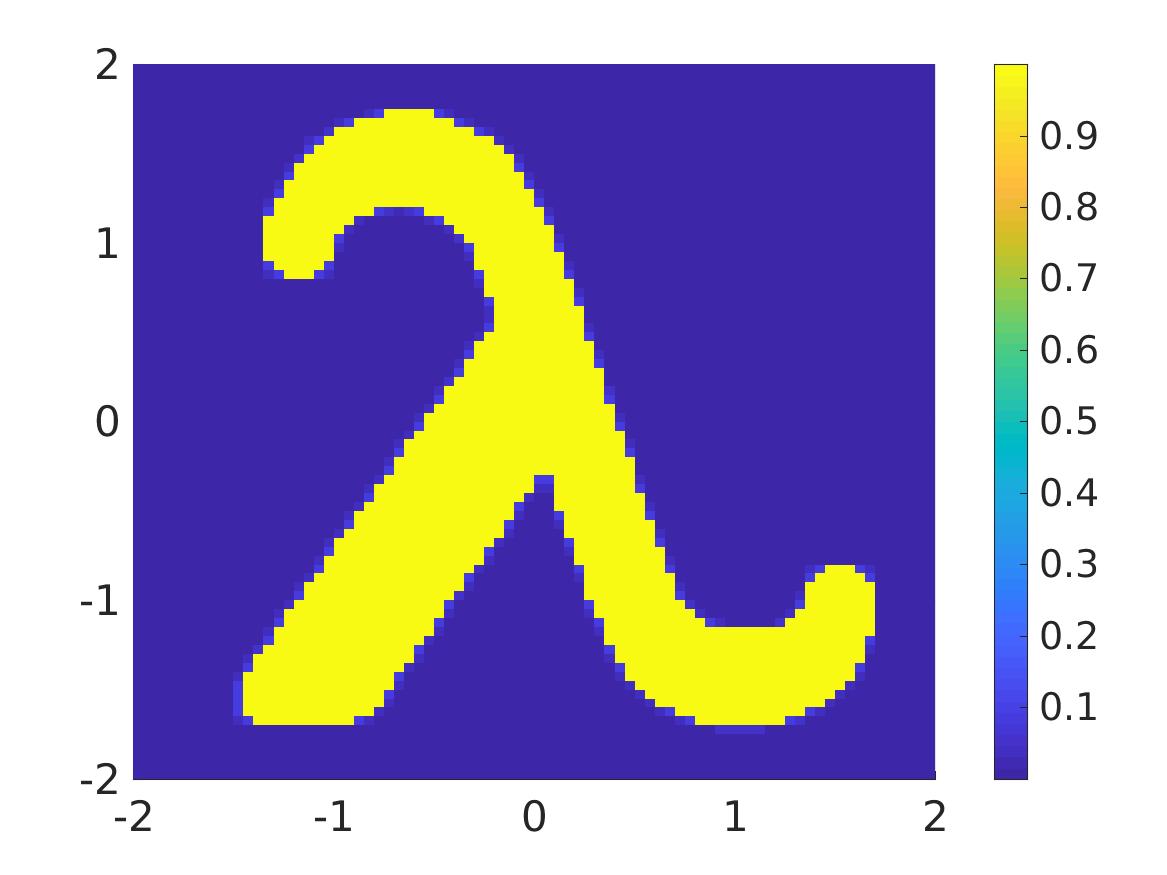} 
		} \hfill
		\subfloat[\label{6b}$f_{\rm comp}$, $\delta = 10\%$]{			\includegraphics[width = .3\textwidth]{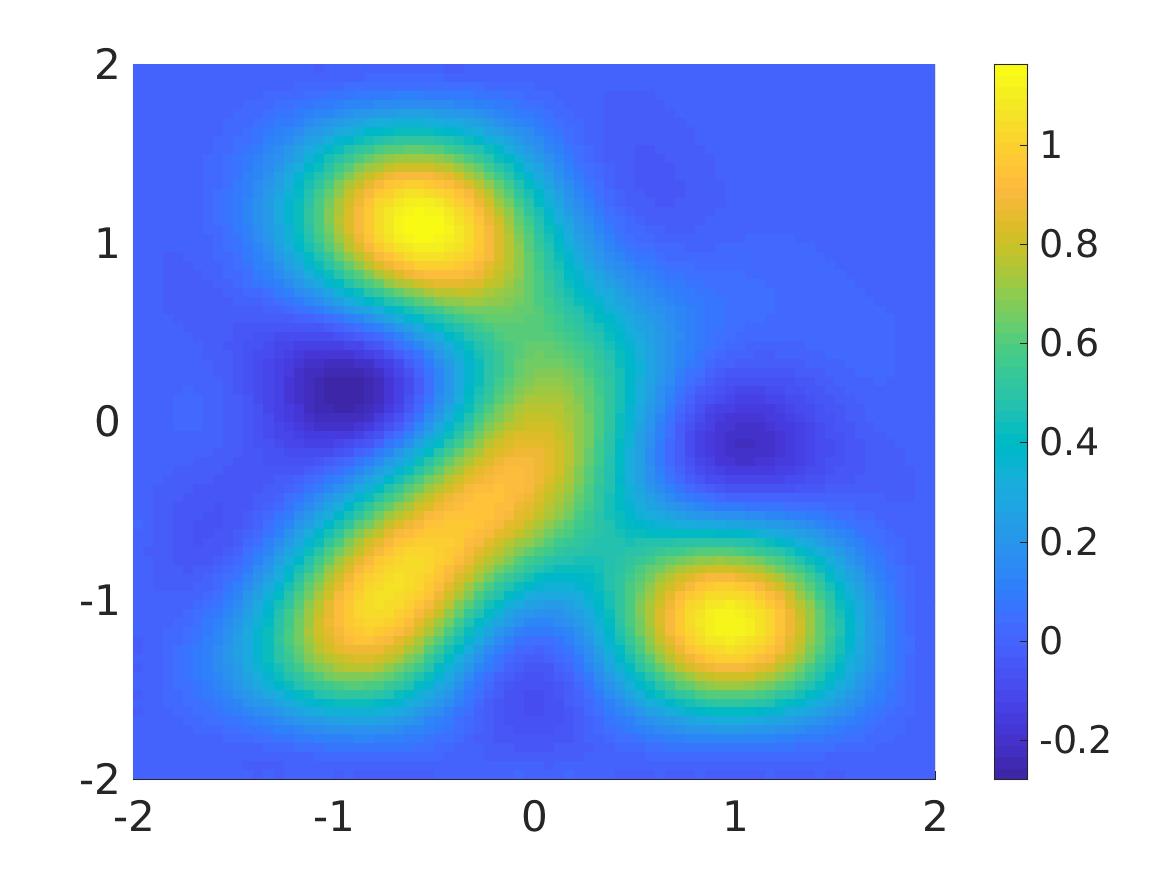} 
		}\hfill
		\subfloat[\label{6c}$f_{\rm comp}$, $\delta = 15\%$]{			\includegraphics[width = .3\textwidth]{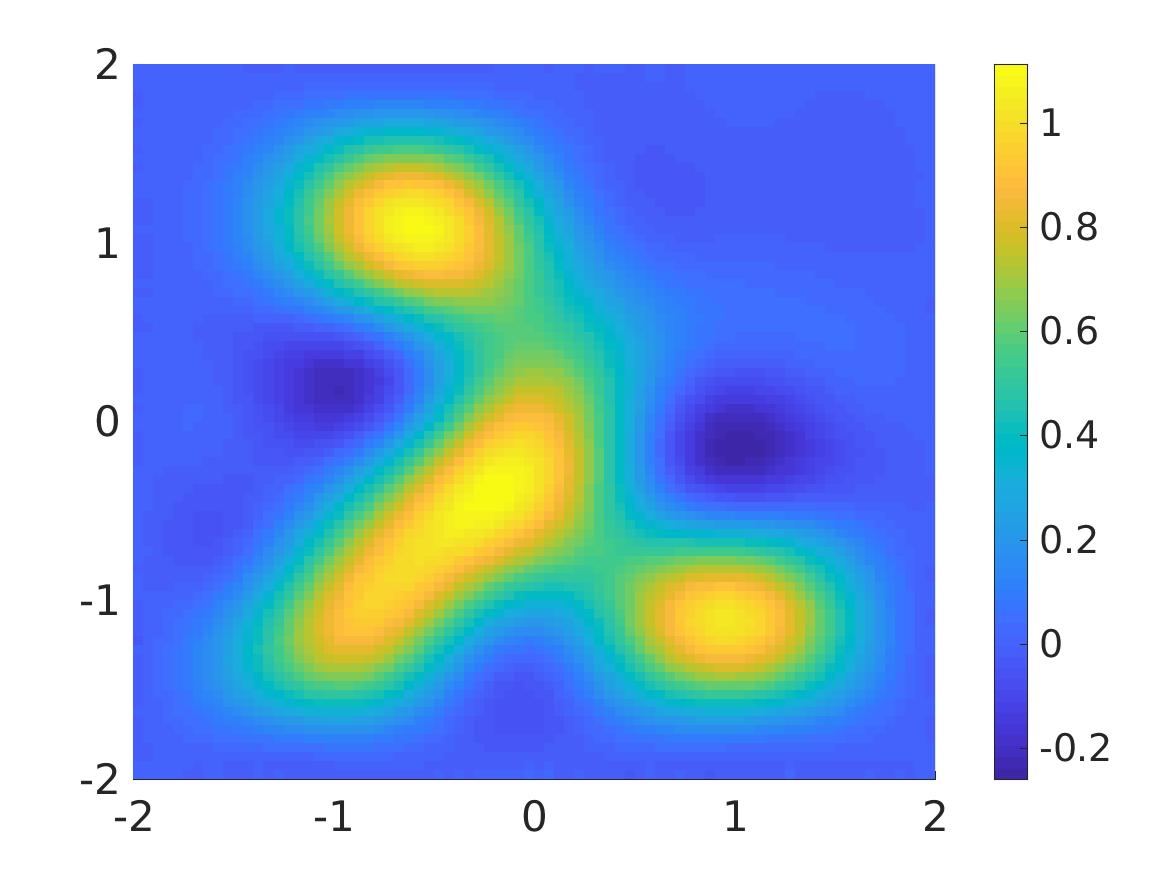} 
		}
	\caption{\label{test 4} Test 4. The true and computed source functions. The reconstruction of $\lambda$ is acceptable.
	The true maximal value of $f_{\rm true}$ is 1. The computed maximal value of $f_{\rm comp}$ when $\delta = 10\%$ is 1.16 (relative error 16\%). The computed maximal value of $f_{\rm comp}$ when $\delta = 15\%$ is 1.11 (relative error 11\%).}
	\end{center}
	\end{figure}

The image of $\lambda$ in Test 4 is acceptable. The reconstructed maximal value in Figure \ref{6c} is better than that in Figure \ref{6b} but the reconstruction of $\lambda$ in Figure \ref{6c} is not as good as that in Figure \ref{6b}.
\end{enumerate}

\section{Concluding remarks}

In this paper, we have solved the problem of reconstructing the initial condition of solution to a general class of parabolic equation from the measurement of lateral Cauchy data.
The main points of the method is derive an approximate model by a truncation of the Fourier series with respect to a special basis.
We solved the approximation model by the quasi-reversibility method.
The convergence of this method when the noise tends to $0$ was proved.
More importantly, numerical examples show that our method is robust when proving accurate reconstructions of the unknown source function from highly noisy data.

Although our method leads to good numerical results, it has a drawback. 
The proof of the ``convergence" of the system \eqref{2.10} as $N \to \infty$ is challenging and is omitted in this paper.
We refer the reader to \cite[Section 4]{Klibanov:anm2015} for an alternative approach to solve Problem \ref{ISP} by which we can avoid this non-rigorousness. 
This method is based on the Carleman estimate for parabolic operators.
However, in this case we can determine a ``near" initial condition for the function $u(\xx, t)$. That means, we can recover the function $u(\xx, \epsilon)$  where $\epsilon$ is any small number. 
Implementation for the method in \cite[Section 4]{Klibanov:anm2015} is valuable. 
We reserve it for a future reseach.

\begin{center}
\textbf{Acknowledgments}
\end{center}
The authors are grateful to Michael V. Klibanov for many fruitful discussions.
The work of Nguyen was supported by US Army Research Laboratory
and US Army Research Office grant W911NF-19-1-0044. In addition, the effort
of Nguyen and Li was supported by research funds FRG 111172 provided by The
University of North Carolina at Charlotte.


\begin{thebibliography}{10}
\providecommand{\url}[1]{\normalfont{#1}}
\providecommand{\urlprefix}{Available from: }

\bibitem{Evans:PDEs2010}
Evans~LC. Partial differential equations. Amer. Math. Soc.; 2010. Graduate
  Studies in Mathematics, Volume 19.

\bibitem{Ladyzhenskaya:sv1985}
Ladyzhenskaya~OA. The boundary value problems of mathematical physics. New
  York: Springer-Verlag; 1985.

\bibitem{Klibanov:ip2006}
Klibanov~MV. Estimates of initial conditions of parabolic equations and
  inequalities via lateral {C}auchy data. Inverse Problems.
  2006;\hspace{0pt}22:495--514.

\bibitem{BadiaDuong:jiip2002}
El~Badia~A, Ha-Duong~T. On an inverse source problem for the heat equation.
  application to a pollution detection problem. Journal of Inverse and
  Ill-posed Problems. 2002;\hspace{0pt}10:585--599.

\bibitem{LiYamamotoZou:cpaa2009}
Li~J, Yamamoto~M, Zou~J. Conditional stability and numerical reconstruction of
  initial temperature. Communications on Pure and Applied Analysis.
  2009;\hspace{0pt}8:361--382.

\bibitem{Lavrentiev:AMS1986}
Lavrent'ev~MM, Romanov~VG, Shishat$\cdot$ski\u{i}. Ill-posed problems of
  mathematical physics and analysis. Providence: RI: AMS; 1986. Translations of
  Mathematical Monographs.

\bibitem{LiuUhlmann:ip2015}
Liu~H, Uhlmann~G. Determining both sound speed and internal source in thermo-
  and photo-acoustic tomography. Inverse Problems. 2015;\hspace{0pt}31:105005.

\bibitem{KatsnelsonNguyen:aml2018}
Katsnelson~V, Nguyen~LV. On the convergence of time reversal method for
  thermoacoustic tomography in elastic media. Applied Mathematics Letters.
  2018;\hspace{0pt}77:79--86.

\bibitem{HaltmeierNguyen:SIAMJIS2017}
Haltmeier~M, Nguyen~LV. Analysis of iterative methods in photoacoustic
  tomography with variable sound speed. SIAM J Imaging Sci.
  2017;\hspace{0pt}10:751--781.

\bibitem{NguyenLiKlibanov:IPI2019}
Nguyen~LH, Li~Q, Klibanov~MV. A convergent numerical method for a
  multi-frequency inverse source problem in inhomogenous media. to appear on
  Inverse Problems and Imaging, preprint, arXiv:190110047. 2019;\hspace{0pt}.

\bibitem{WangGuoZhangLiu:ip2017}
Wang~X, Guo~Y, Zhang~D, et~al. Fourier method for recovering acoustic sources
  from multi-frequency far-field data. Inverse Problems.
  2017;\hspace{0pt}33:035001.

\bibitem{WangGuoLiLiu:ip2017}
Wang~X, Guo~Y, Li~J, et~al. Mathematical design of a novel input/instruction
  device using a moving acoustic emitter. Inverse Problems.
  2017;\hspace{0pt}33:105009.

\bibitem{LiLiuSun:IPI2018}
Li~J, Liu~H, Sun~H. On a gesture-computing technique using eletromagnetic
  waves. Inverse Probl Imaging. 2018;\hspace{0pt}12:677--696.

\bibitem{ZhangGuoLiu:ip2018}
Zhang~D, Guo~Y, Li~J, et~al. Retrieval of acoustic sources from multi-frequency
  phaseless data. Inverse Problems. 2018;\hspace{0pt}34:094001.

\bibitem{Klibanov:jiip2017}
Klibanov~MV. Convexification of restricted {D}irichlet to {N}eumann map. J
  Inverse and Ill-Posed Problems. 2017;\hspace{0pt}25(5):669--685.

\bibitem{KlibanovNguyen:ip2019}
Klibanov~MV, Nguyen~LH. {PDE}-based numerical method for a limited angle
  {X}-ray tomography. Inverse Problems. 2019;\hspace{0pt}35:045009.

\bibitem{KlibanovLiZhang:ip2019}
Klibanov~MV, Li~J, Zhang~W. Convexification for the inversion of a time
  dependent wave front in a heterogeneous medium. Inverse Problems.
  2019;\hspace{0pt}35:035005.

\bibitem{LattesLions:e1969}
Latt\`es~R, Lions~JL. The method of quasireversibility: Applications to partial
  differential equations. New York: Elsevier; 1969.

\bibitem{Becacheelal:AIMS2015}
B\'ecache~E, Bourgeois~L, Franceschini~L, et~al. Application of mixed
  formulations of quasi-reversibility to solve ill-posed problems for heat and
  wave equations: The 1d case. Inverse Problems \& Imaging.
  2015;\hspace{0pt}9(4):971--1002.

\bibitem{Bourgeois:ip2006}
Bourgeois~L. Convergence rates for the quasi-reversibility method to solve the
  {C}auchy problem for {L}aplace's equation. Inverse Problems.
  2006;\hspace{0pt}22:413--430.

\bibitem{BourgeoisDarde:ip2010}
Bourgeois~L, Dard\'e~J. A duality-based method of quasi-reversibility to solve
  the {C}auchy problem in the presence of noisy data. Inverse Problems.
  2010;\hspace{0pt}26:095016.

\bibitem{BourgeoisPonomarevDarde:ipi2019}
Bourgeois~L, Ponomarev~D, Dard\'e~J. An inverse obstacle problem for the wave
  equation in a finite time domain. Inverse Probl Imaging.
  2019;\hspace{0pt}13(2):377--400.

\bibitem{ClasonKlibanov:sjsc2007}
Clason~C, Klibanov~MV. The quasi-reversibility method for thermoacoustic
  tomography in a heterogeneous medium. SIAM J Sci Comput.
  2007;\hspace{0pt}30:1--23.

\bibitem{Dadre:ipi2016}
Dard\'e~J. Iterated quasi-reversibility method applied to elliptic and
  parabolic data completion problems. Inverse Problems and Imaging.
  2016;\hspace{0pt}10:379--407.

\bibitem{KaltenbacherRundell:ipi2019}
Kaltenbacher~B, Rundell~W. Regularization of a backwards parabolic equation by
  fractional operators. Inverse Probl Imaging. 2019;\hspace{0pt}13(2):401--430.

\bibitem{KlibanovSantosa:SIAMJAM1991}
Klibanov~MV, Santosa~F. A computational quasi-reversibility method for {C}auchy
  problems for {L}aplace's equation. SIAM J Appl Math.
  1991;\hspace{0pt}51:1653--1675.

\bibitem{Klibanov:jiipp2013}
Klibanov~MV. Carleman estimates for global uniqueness, stability and numerical
  methods for coefficient inverse problems. J Inverse and Ill-Posed Problems.
  2013;\hspace{0pt}21:477--560.

\bibitem{LocNguyen:ip2019}
Nguyen~LH. An inverse space-dependent source problem for hyperbolic equations
  and the {L}ipschitz-like convergence of the quasi-reversibility method.
  Inverse Problems. 2019;\hspace{0pt}35:035007.

\bibitem{Klibanov:anm2015}
Klibanov~MV. Carleman estimates for the regularization of ill-posed {C}auchy
  problems. Applied Numerical Mathematics. 2015;\hspace{0pt}94:46--74.

\bibitem{Tihkonov:kapg1995}
Tikhonov~AN, Goncharsky~A, Stepanov~VV, et~al. Numerical methods for the
  solution of ill-posed problems. Dordrecht: Kluwer Academic Publishers Group;
  1995.

\bibitem{BukhgeimKlibanov:smd1981}
Bukhgeim~AL, Klibanov~MV. Uniqueness in the large of a class of
  multidimensional inverse problems. Soviet Math Doklady.
  1981;\hspace{0pt}17:244--247.

\bibitem{KlibanovKolesov:ip2018}
Klibanov~MV, Kolesov~AE, Sullivan~A, et~al. A new version of the
  convexification method for a 1-{D} coefficient inverse problem with
  experimental data. Inverse Problems, to appear,
  https://doiorg/101088/1361-6420/aadbc6. 2018;\hspace{0pt}.

\bibitem{KlibanovLoc:ipi2016}
Klibanov~MV, Nguyen~LH, Sullivan~A, et~al. A globally convergent numerical
  method for a 1-d inverse medium problem with experimental data. Inverse
  Problems and Imaging. 2016;\hspace{0pt}10:1057--1085.

\bibitem{MinhLoc:tams2015}
Nguyen~HM, Nguyen~LH. Cloaking using complementary media for the {H}elmholtz
  equation and a three spheres inequality for second order elliptic equations.
  Transaction of the American Mathematical Society. 2015;\hspace{0pt}2:93--112.

\end{thebibliography}

\end{document}